       \font\tenmsb=msbm10
       \font\sevenmsb=msbm7
       \font\fivemsb=msbm5
\let\amstexloaded@\relax\fi
       \def\spaces@{\space\space\space\space\space}
       \def\spaces@@{\spaces@\spaces@\spaces@\spaces@\spaces@}
       \def\space@.  {\futurelet\space@\relax}
       \def\Err@#1{\errhelp\defaulthelp@\errmessage{AmS-TeX error: #1}}
       \def\relaxnext@{\let\next\relax}
       \def\accentfam@{7}
       \def\noaccents@{\def\accentfam@{0}}
       \def\Cal{\relaxnext@\ifmmode\let\next\Cal@\else
       \def\next{\Err@{Use \string\Cal\space only in math mode}}\fi\next}
       \def\Cal@#1{{\Cal@@{#1}}}
       \def\Cal@@#1{\noaccents@\fam\tw@#1}
       \def\Bbb{\relaxnext@\ifmmode\let\next\Bbb@\else
       \def\next{\Err@{Use \string\Bbb\space only in math mode}}\fi\next}
       \def\Bbb@#1{{\Bbb@@{#1}}}
       \def\Bbb@@#1{\noaccents@\fam\msbfam#1}
\newtheorem{thm}{Theorem}[section]
\newtheorem{prop}[thm]{Proposition}
\newtheorem{lem}[thm]{Lemma}
\newtheorem{iteration lemma}[thm]{iteration Lemma}
\newtheorem{cor}[thm]{Corollary}
\newtheorem{defn}[thm]{Definition}
\newtheorem*{acknowledgements*}{ACKNOWLEDGEMENTS}
\begin{document}

\setlength{\columnsep}{5pt}

\title{\bf Representations and properties of the $W$-weighted core-EP inverse}
\author{Yuefeng Gao$^{1,2}$\footnote{ E-mail: yfgao91@163.com},
 \ Jianlong Chen$^{1}$\footnote{ Corresponding author. E-mail: jlchen@seu.edu.cn},
 \ Pedro Patr\'{i}cio$^{2}$ \footnote{ E-mail: pedro@math.uminho.pt}
 \\\\
$^{1}$School of  Mathematics, Southeast University,  Nanjing 210096,  China;~~~~~~~~~~~~~~~ \\$^{2}$CMAT-Centro de Matem\'{a}tica, Universidade do Minho, Braga 4710-057, Portugal}

\date{}

\maketitle
\begin{quote}
{\small  In this paper, we investigate the weighted core-EP inverse introduced by Ferreyra, Levis and Thome.
Several  computational representations of the weighted core-EP inverse are obtained in terms of  singular-value decomposition, full-rank decomposition and QR decomposition. These representations are expressed in terms of various matrix powers as well as matrix product involving the core-EP inverse, Moore-Penrose inverse and usual matrix inverse.  Finally, those representations involving only Moore-Penrose inverse are compared and analyzed via computational complexity and numerical examples.

\textbf {Keywords:} {\small  weighted core-EP inverse, core-EP inverse, pseudo core inverse, outer inverse, complexity}

\textbf {AMS Subject Classifications:} 15A09; 65F20; 68Q25}
\end{quote}

\section{ Introduction }
Let $\mathbb{C}^{m\times n}$ be the set of all $m\times n$ complex matrices and let $\mathbb{C}_r^{m\times n}$ be the set of all $m\times n$ complex matrices of rank $r$.  For each complex matrix $A\in \mathbb{C}^{m\times n}$, $A^*$,   $\mathcal{R}_s(A)$, $\mathcal{R}(A)$ and $\mathcal{N}(A)$ denote the conjugate transpose, row space,  range (column space) and null space of $A$, respectively. The index of $A\in \mathbb{C}^{n\times n}$, denoted by ind$(A)$, is the smallest non-negative integer $k$ for which rank$(A^k)=$rank$(A^{k+1})$.
The Moore-Penrose inverse (also known as the pseudoinverse) of $A\in \mathbb{C}^{m\times n}$, Drazin inverse of $A\in \mathbb{C}^{n\times n}$ are denoted as usual by $A^{\dag}$, $A^{D}$ respectively.

The Drazin inverse was extended to a rectangular matrix by Cline and Greville \cite{C1980}. 
Let $A\in \mathbb{C}^{m\times n},~W\in \mathbb{C}^{n\times m}$ and $k=$max$\{$ind$(AW),$ ind$(WA)\}$. The $W$-weighted Drazin inverse of $A$, denoted by $A^{D,W}$, is the unique solution to 
$$(AW)^k=(AW)^{k+1}XW,~X=XWAWX~\text{and}~AWX=XWA.$$
Many authors have been focusing on the weighted Drazin inverse and have achieved much in the aspect of representations (see for example, \cite{ZH2007,M2017,W2003}).

Baksalary and Trenkler \cite{B2010} introduced the notion of core inverse for a square  matrix of index one.
Then, 
Manjunatha Prasad and Mohana \cite{2014D} proposed the core-EP inverse for a square  matrix of arbitrary index,
as an extension of  the core inverse. Later, Gao and Chen \cite{G2018} gave a characterization for the core-EP inverse in terms of three equations. The core-EP inverse of $A\in \mathbb{C}^{n\times n}$, denoted by $A^{\scriptsize{\textcircled{\tiny \dag}}}$, is the unique solution to 
\begin{equation}
XA^{k+1}=A^k,~AX^2=X~\text{and}~(AX)^*=AX, 
\end{equation}
where $k=$ind$(A)$. 
The core-EP inverse is an
outer inverse (resp. \{2\}-inverse), i.e., $A^{\scriptsize{\textcircled{\tiny \dag}}}AA^{\scriptsize{\textcircled{\tiny \dag}}}=A^{\scriptsize{\textcircled{\tiny \dag}}}$.  The core-EP inverse has the following properties: 

(1)~$\mathcal{R}(A^{{\scriptsize{\textcircled{\tiny \dag}}}})=\mathcal{R}(A^k)$, $\mathcal{N}(A^{{\scriptsize{\textcircled{\tiny \dag}}}})=\mathcal{N}((A^k)^*),$

(2)~$\mathcal{R}(A^{{\scriptsize{\textcircled{\tiny \dag}}}})\oplus \mathcal{N}(A^{{\scriptsize{\textcircled{\tiny \dag}}}})=\mathbb{C}^{n\times n}$,

(3)~$AA^{{\scriptsize{\textcircled{\tiny \dag}}}}$ is an orthogonal projector onto $\mathcal{R}(A^k)$ and $A^{{\scriptsize{\textcircled{\tiny \dag}}}}A$ is an oblique projector on to 

$\mathcal{R}(A^k)$ along $\mathcal{N}((A^k)^{\dag}A)$.
 \\
The core inverse and core-EP inverse have applications in partial order theory (see for example, \cite{G2017,2017,W2016}).

Recently, an extension of the core-EP inverse from a square matrix to a rectangular matrix was made by 
Ferreyra et al. \cite{F2017}. Let $A\in \mathbb{C}^{m\times n},~W\in \mathbb{C}^{n\times m}$ and $k=$max$\{$ind$(AW),$ ind$(WA)\}$. The $W$-weighted core-EP inverse of $A$, denoted by $A^{\scriptsize{\textcircled{\tiny \dag}},W}$,  is the unique solution to the system 
\begin{equation}
WAWX=(WA)^k[(WA)^k]^{\dag}~\text{and}~\mathcal{R}(X)\subseteq \mathcal{R}((AW)^k).
\end{equation}
Meanwhile, the authors proved that the $W$-weighted core-EP inverse of $A$ can be written as a product of matrix powers involving two Moore-Penrose inverses:
\begin{equation}
A^{\scriptsize{\textcircled{\tiny \dag}},W}=[W(AW)^{l+1}[(AW)^l]^{\dag}]^{\dag}~(l\geq k).
\end{equation}
Then, Mosi\'{c} \cite{MM2017}  studied the weighted core-EP inverse of an operator between two Hilbert spaces as a generalization of the weighted core-EP inverse of a rectangular matrix.

In this paper, our main goal is to further study the weighted core-EP inverse for a rectangular  matrix and compile its new, computable representations. The paper is carried out as follows. 
In Section 2, first of all, the weighted core-EP inverse  is characterized in terms of three equations. This could be very useful in testing the accuracy of a given numerical method (to compute the weighted core-EP inverse) via residual norms. 
Then, we derive the canonical form for the $W$-weighted core-EP inverse of $A$ by using the singular value decompositions of $A$ and $W$.
Later, representations of the weighted core-EP inverse are obtained via  full-rank decomposition, general algebraic structure (GAS) and QR decomposition in conjunction with the fact that the weighted core-EP inverse is a particular outer inverse. These representations are expressed eventually through various matrix powers as well as matrix product involving the core-EP inverse, Moore-Penrose inverse and usual matrix inverse. 
In Section 3, some properties of the weighted core-EP inverse are exhibited naturally as outcomes of given representations. 
As mentioned earlier, the weighted core-EP inverse is a particular outer inverse. It is known that  the inverse along an element \cite{2011} and $(B,C)$-inverse \cite{D2012} are outer inverses as well. Thus, 
in Section 4,
we wish to reveal the  relations among the weighted core-EP inverse, weighted Drazin inverse, the inverse along an element, and $(B,C)$-inverse.   
In Section 5, the computational complexities of  proposed representations involving pseudoinverse are estimated. 
In the last Section 6,  corresponding numerical examples  are implemented by using Matlab R2017b. 
   
\section{Representations of the weighted core-EP inverse}
In this section, we compile some new expressions  of the weighted core-EP inverse for a rectangular complex matrix. First, the weighted core-EP inverse is characterized in terms of three equations.This plays a key role in  examining the accuracy of a numerical method.
\begin{lem}\label{x}\emph{\cite[Theorem 2.3]{G2018}}~Let $A\in \mathbb{C}^{n\times n}$ and  let $l$ be a non-negative integer such that $l\geq k={\rm ind}(A)$.  Then  $A^{\scriptsize{\textcircled{\tiny \dag}}}=A^DA^{l}(A^{l})^{\dag}$. In this case, $AA^{\scriptsize{\textcircled{\tiny \dag}}}=A^{l}(A^{l})^{\dag}$.
\end{lem}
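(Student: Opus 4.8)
The plan is to take $X:=A^{D}A^{l}(A^{l})^{\dag}$ and verify directly that it satisfies the three equations $XA^{k+1}=A^{k}$, $AX^{2}=X$ and $(AX)^{*}=AX$ that uniquely determine $A^{\scriptsize{\textcircled{\tiny \dag}}}$; then the identity $AA^{\scriptsize{\textcircled{\tiny \dag}}}=A^{l}(A^{l})^{\dag}$ is read off from the expression obtained for $AX$ along the way. The ingredients are the standard identities for the Drazin and Moore--Penrose inverses, together with the stabilization that holds once $l\geq k=\mathrm{ind}(A)$: I will use that $\mathcal{R}(A^{l})=\mathcal{R}(A^{k})=\mathcal{R}(A^{k+1})$ and $\mathcal{N}(A^{l})=\mathcal{N}(A^{k})$ for every $l\geq k$, that $A^{l}(A^{l})^{\dag}$ is the orthogonal projector onto $\mathcal{R}(A^{l})$, that $\mathcal{R}(A^{D})\subseteq\mathcal{R}(A^{k})$, and the Drazin relations $A^{D}A^{k+1}=A^{k}$, $AA^{D}=A^{D}A$, $AA^{D}A^{D}=A^{D}$.

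First I would compute $AX$. From $A^{D}A^{k+1}=A^{k}$ and the commutativity $A^{j}A^{D}=A^{D}A^{j}$ one gets $AA^{D}A^{k}=A^{k}$, and hence $AA^{D}A^{l}=A^{l-k}AA^{D}A^{k}=A^{l}$ for all $l\geq k$. Therefore $AX=AA^{D}A^{l}(A^{l})^{\dag}=A^{l}(A^{l})^{\dag}$, which is the orthogonal projector onto $\mathcal{R}(A^{l})$; being an orthogonal projector it is Hermitian, so $(AX)^{*}=AX$. This already pins down $AX=A^{l}(A^{l})^{\dag}$, which will yield the closing assertion once $X=A^{\scriptsize{\textcircled{\tiny \dag}}}$ is known.

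Next, using the formula just found, $AX^{2}=(AX)X=A^{l}(A^{l})^{\dag}\,A^{D}A^{l}(A^{l})^{\dag}$; since $A^{l}(A^{l})^{\dag}$ is the orthogonal projector onto $\mathcal{R}(A^{l})=\mathcal{R}(A^{k})\supseteq\mathcal{R}(A^{D})$, it fixes $A^{D}$, i.e. $A^{l}(A^{l})^{\dag}A^{D}=A^{D}$, so $AX^{2}=A^{D}A^{l}(A^{l})^{\dag}=X$. For the first equation, $\mathcal{R}(A^{k+1})=\mathcal{R}(A^{k})=\mathcal{R}(A^{l})$ gives $A^{l}(A^{l})^{\dag}A^{k+1}=A^{k+1}$, whence $XA^{k+1}=A^{D}A^{l}(A^{l})^{\dag}A^{k+1}=A^{D}A^{k+1}=A^{k}$. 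With all three equations verified, the uniqueness in the characterization of the core-EP inverse forces $X=A^{\scriptsize{\textcircled{\tiny \dag}}}$, and then $AA^{\scriptsize{\textcircled{\tiny \dag}}}=AX=A^{l}(A^{l})^{\dag}$.

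The only genuinely delicate point is the passage from the index $k$ to an arbitrary exponent $l\geq k$: every simplification above (``$AA^{D}A^{l}=A^{l}$'', ``$A^{l}(A^{l})^{\dag}$ fixes $A^{D}$ and fixes $A^{k+1}$'') rests on the stabilization of $\mathcal{R}(A^{j})$ and $\mathcal{N}(A^{j})$ at $j=k$, so I would isolate that as a preliminary remark and apply it uniformly. An alternative, slightly more computational route is to write $A=U\left[\begin{smallmatrix}T&S\\0&N\end{smallmatrix}\right]U^{*}$ with $U$ unitary, $T$ invertible and $N$ nilpotent of index $\leq k$, compute $A^{l}$, $(A^{l})^{\dag}$ and $A^{D}$ blockwise, and check that $A^{D}A^{l}(A^{l})^{\dag}=U\left[\begin{smallmatrix}T^{-1}&0\\0&0\end{smallmatrix}\right]U^{*}$, the known closed form of $A^{\scriptsize{\textcircled{\tiny \dag}}}$; but the equation-checking argument is shorter and coordinate-free, so I would present that one.
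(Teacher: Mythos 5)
Your verification is correct. Note that the paper offers no proof of this lemma at all --- it is quoted verbatim from \cite[Theorem 2.3]{G2018} --- so there is nothing internal to compare against; your argument of checking the three defining equations $XA^{k+1}=A^{k}$, $AX^{2}=X$, $(AX)^{*}=AX$ for $X=A^{D}A^{l}(A^{l})^{\dag}$ and invoking the uniqueness built into the characterization (1.1) is the natural and standard route, and every step holds: $AA^{D}A^{l}=A^{l}$ for $l\geq k$, the projector $A^{l}(A^{l})^{\dag}$ fixes both $A^{D}$ and $A^{k+1}$ because $\mathcal{R}(A^{D})\subseteq\mathcal{R}(A^{k})=\mathcal{R}(A^{l})=\mathcal{R}(A^{k+1})$, and the identity $AA^{\scriptsize{\textcircled{\tiny \dag}}}=A^{l}(A^{l})^{\dag}$ drops out of the computation of $AX$. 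The one thing worth making explicit if you write this up is the line $A^{D}=A^{k}(A^{D})^{k+1}$, which is what justifies $\mathcal{R}(A^{D})\subseteq\mathcal{R}(A^{k})$.
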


\begin{thm}\label{2.1} Let $A\in \mathbb{C}^{m\times n},~W\in \mathbb{C}^{n\times m}$ and $k=\mathrm{max}\{{\rm ind}(AW),{\rm ind}(WA)\}$. Then  there exists a unique $X\in \mathbb{C}^{m\times n}$ such that 
\begin{equation}
XW(AW)^{k+1}=(AW)^k,~AWXWX=X~\text{and}~(WAWX)^*=WAWX.
\end{equation}
The unique $X$ which satisfies the above equations is $X=A[(WA)^{\scriptsize{\textcircled{\tiny \dag}}}]^2$.
\end{thm}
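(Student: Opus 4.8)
The plan is to establish the theorem in two stages: first verify that $X = A[(WA)^{\scriptsize{\textcircled{\tiny \dag}}}]^2$ satisfies the three equations, then prove uniqueness. Throughout I would write $B = WA \in \mathbb{C}^{n\times n}$, so that $\mathrm{ind}(B) \leq k$ and Lemma~\ref{x} applies to $B$: $B^{\scriptsize{\textcircled{\tiny \dag}}} = B^D B^l (B^l)^{\dag}$ and $BB^{\scriptsize{\textcircled{\tiny \dag}}} = B^l(B^l)^{\dag}$ for any $l \geq k$. The basic identities I expect to lean on are the defining equations of the (unweighted) core-EP inverse applied to $B$, namely $B^{\scriptsize{\textcircled{\tiny \dag}}} B^{k+1} = B^k$, $B(B^{\scriptsize{\textcircled{\tiny \dag}}})^2 = B^{\scriptsize{\textcircled{\tiny \dag}}}$, and $(BB^{\scriptsize{\textcircled{\tiny \dag}}})^* = BB^{\scriptsize{\textcircled{\tiny \dag}}}$, together with the elementary "shift" relation $(WA)^j W = W(AW)^j$ for all $j \geq 1$, which lets me move factors of $W$ across powers of $AW$ and $WA$.

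For the existence half, set $X = AB^{\scriptsize{\textcircled{\tiny \dag}}}B^{\scriptsize{\textcircled{\tiny \dag}}}$. To check $XW(AW)^{k+1} = (AW)^k$: using the shift relation repeatedly, $XW(AW)^{k+1} = AB^{\scriptsize{\textcircled{\tiny \dag}}}B^{\scriptsize{\textcircled{\tiny \dag}}}(WA)^{k+1}W = AB^{\scriptsize{\textcircled{\tiny \dag}}}B^{\scriptsize{\textcircled{\tiny \dag}}}B^{k+1}W$, and then $B^{\scriptsize{\textcircled{\tiny \dag}}}B^{k+1} = B^k$ collapses this to $AB^{\scriptsize{\textcircled{\tiny \dag}}}B^k W$; one more application of $B^{\scriptsize{\textcircled{\tiny \dag}}}B^{k+1} = B^k$ in the form $B^{\scriptsize{\textcircled{\tiny \dag}}}B^{k} \cdot (\text{one factor of } B)$ — more carefully, I would note $AB^{\scriptsize{\textcircled{\tiny \dag}}}B^k = A(WA)^{k-1}(B^{\scriptsize{\textcircled{\tiny \dag}}}B) $... the cleanest route is to observe $B^{\scriptsize{\textcircled{\tiny \dag}}}B^{k}W = B^{\scriptsize{\textcircled{\tiny \dag}}}B B^{k-1}W$ and use that $B^{\scriptsize{\textcircled{\tiny \dag}}}B^{k+1}=B^k \Rightarrow B^{\scriptsize{\textcircled{\tiny \dag}}}B^{k} = B^{k-1}$ is \emph{not} generally valid, so instead I would write $AB^{\scriptsize{\textcircled{\tiny \dag}}}B^{k+1}W = AB^kW = (AW)^k$ directly, i.e. keep the full power $B^{k+1}$ intact and apply the core-EP identity once. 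For $AWXWX = X$: compute $WX = WAB^{\scriptsize{\textcircled{\tiny \dag}}}B^{\scriptsize{\textcircled{\tiny \dag}}} = B\cdot B^{\scriptsize{\textcircled{\tiny \dag}}}B^{\scriptsize{\textcircled{\tiny \dag}}} = BB^{\scriptsize{\textcircled{\tiny \dag}}}B^{\scriptsize{\textcircled{\tiny \dag}}} = B^{\scriptsize{\textcircled{\tiny \dag}}}$ by the identity $B(B^{\scriptsize{\textcircled{\tiny \dag}}})^2 = B^{\scriptsize{\textcircled{\tiny \dag}}}$; hence $WXWX = B^{\scriptsize{\textcircled{\tiny \dag}}} WX = B^{\scriptsize{\textcircled{\tiny \dag}}}B^{\scriptsize{\textcircled{\tiny \dag}}}$... wait, $WXWX$ needs $AW$ in front, so $AWXWX = AW \cdot X \cdot WX$; since $WX = B^{\scriptsize{\textcircled{\tiny \dag}}}$ we get $AWXWX = AW X B^{\scriptsize{\textcircled{\tiny \dag}}}$, and $AWX = AWAB^{\scriptsize{\textcircled{\tiny \dag}}}B^{\scriptsize{\textcircled{\tiny \dag}}} = A(WA)B^{\scriptsize{\textcircled{\tiny \dag}}}B^{\scriptsize{\textcircled{\tiny \dag}}} = AB B^{\scriptsize{\textcircled{\tiny \dag}}}B^{\scriptsize{\textcircled{\tiny \dag}}} = AB^{\scriptsize{\textcircled{\tiny \dag}}}$, so $AWXWX = AB^{\scriptsize{\textcircled{\tiny \dag}}}B^{\scriptsize{\textcircled{\tiny \dag}}} = X$. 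For the Hermitian equation, $WAWX = WAWAB^{\scriptsize{\textcircled{\tiny \dag}}}B^{\scriptsize{\textcircled{\tiny \dag}}} = B^2 B^{\scriptsize{\textcircled{\tiny \dag}}}B^{\scriptsize{\textcircled{\tiny \dag}}} = B(B B^{\scriptsize{\textcircled{\tiny \dag}}})B^{\scriptsize{\textcircled{\tiny \dag}}} = B B^{\scriptsize{\textcircled{\tiny \dag}}}$ (using $B(B^{\scriptsize{\textcircled{\tiny \dag}}})^2 = B^{\scriptsize{\textcircled{\tiny \dag}}}$ again as $B^2(B^{\scriptsize{\textcircled{\tiny \dag}}})^2 = BB^{\scriptsize{\textcircled{\tiny \dag}}}$), which is Hermitian by the third core-EP identity for $B$.

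For uniqueness, suppose $X$ satisfies all three equations. From $AWXWX = X$ one gets $\mathcal{R}(X) \subseteq \mathcal{R}(AW)$, and iterating, $X = (AW)^j (XW)^j X$ for every $j$, so $\mathcal{R}(X) \subseteq \mathcal{R}((AW)^k)$. Multiplying the first equation on the left by $W$ gives $WXW(AW)^{k+1} = W(AW)^k = (WA)^k W$, i.e. $(WX)\,W(AW)^{k+1} = (WA)^{k+1}$ after one more shift, which pins down $WX$ as a core-EP-type inverse of $B = WA$; combined with the Hermitian condition and $\mathcal{R}(WX)\subseteq\mathcal{R}(B^k)$ this forces $WX = B^{\scriptsize{\textcircled{\tiny \dag}}}$ by the uniqueness of the core-EP inverse (Lemma~\ref{x}'s defining system, equation~(2.1)). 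Then the second equation yields $X = AWXWX = A(WX)(WX) = A(B^{\scriptsize{\textcircled{\tiny \dag}}})^2$, as claimed. I expect the main obstacle to be the uniqueness argument — specifically, carefully showing that the three \emph{weighted} equations for $X$ force $WX$ to satisfy \emph{exactly} the three \emph{unweighted} core-EP equations for $WA$ (so that Gao--Chen uniqueness can be invoked), since the range condition $\mathcal{R}(WX)\subseteq\mathcal{R}((WA)^k)$ and the cube-vs-square bookkeeping between $XWX$ and $(WX)^2$ require the shift identity $(WA)^jW = W(AW)^j$ to be applied in just the right places; the three verifications in the existence half are routine once that identity is in hand.
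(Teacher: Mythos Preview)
Your existence verification is essentially the paper's: both reduce to the core-EP identities for $B=WA$, with the first equation ultimately handled via the Drazin identity $A(WA)^D=(AW)^DA$ (the route you settle on after the false start).

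For uniqueness your approach is genuinely different. The paper takes two solutions $X,Y$ and proves $X=Y$ by a direct chain of equalities: iterate $X=(AW)^k(XW)^kX$, substitute $(AW)^k=YW(AW)^{k+1}$, then use the Hermitian condition to replace $X$-blocks by $Y$-blocks until everything collapses to $Y$. You instead show that $WX$ must satisfy the unweighted core-EP system for $B=WA$, invoke the known uniqueness there to force $WX=B^{\scriptsize{\textcircled{\tiny \dag}}}$, and recover $X=AWXWX=A(WX)^2$. This is conceptually cleaner and recycles the Gao--Chen result rather than reproving the combinatorial $X=Y$ argument; the paper's route, by contrast, is self-contained and does not appeal to the unweighted uniqueness.

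One fix is needed in your sketch. The line ``$(WX)\,W(AW)^{k+1}=(WA)^{k+1}$'' is not correct: left-multiplying the first equation by $W$ alone gives only $(WX)(WA)^{k+1}W=(WA)^kW$, and $W$ cannot be cancelled on the right. The remedy is to right-multiply by $A$ as well, yielding $(WX)(WA)^{k+2}=(WA)^{k+1}$. This is the first core-EP equation with exponent $k+1\geq\mathrm{ind}(WA)$, and the three-equation characterisation (1.1) is known to determine the core-EP inverse for any exponent at or above the index, so uniqueness still applies. The other two equations go through exactly as you say: $B(WX)^2=WX$ is $AWXWX=X$ hit on the left by $W$, and $(B\cdot WX)^*=B\cdot WX$ is the Hermitian hypothesis verbatim.
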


\begin{proof}~First of all, we can check that $X=A[(WA)^{\scriptsize{\textcircled{\tiny \dag}}}]^2$ satisfies the equations in (2.1). In fact, in view of Lemma \ref{x},
\begin{equation*}
\begin{aligned}
A[(WA)^{\scriptsize{\textcircled{\tiny \dag}}}]^2W(AW)^{k+1}
&=A(WA)^{\scriptsize{\textcircled{\tiny \dag}}}[(WA)^{\scriptsize{\textcircled{\tiny \dag}}}(WA)^{k+1}]W=A(WA)^{\scriptsize{\textcircled{\tiny \dag}}}(WA)^{k}W\\
&=A(WA)^{D}(WA)^{k}[(WA)^{k}]^{\dag}(WA)^{k}W\\
&=A(WA)^{D}(WA)^{k}W,~\text{which~implies~that}~\\
A[(WA)^{\scriptsize{\textcircled{\tiny \dag}}}]^2W(AW)^{k+1}
&=(AW)^{D}(AW)^{k+1}=(AW)^{k},~\text{since~}A(WA)^{D}=(AW)^{D}A;\\
AWA[(WA)^{\scriptsize{\textcircled{\tiny \dag}}}]^2WA[(W&A)^{\scriptsize{\textcircled{\tiny \dag}}}]^2
=A(WA)^{\scriptsize{\textcircled{\tiny \dag}}}WA[(WA)^{\scriptsize{\textcircled{\tiny \dag}}}]^2=A[(WA)^{\scriptsize{\textcircled{\tiny \dag}}}]^2;\\
 (WAWA[(WA)^{\scriptsize{\textcircled{\tiny \dag}}}]^2)^*=&WA(WA)^{\scriptsize{\textcircled{\tiny \dag}}}.
 \end{aligned}
\end{equation*}
Next, we would give a proof of the uniqueness of $X$. If $$XW(AW)^{k+1}=(AW)^k,~AWXWX=X~\text{and}~(WAWX)^*=WAWX$$ and
$$YW(AW)^{k+1}=(AW)^k,~AWYWY=Y~\text{and}~(WAWY)^*=WAWY,$$
then 
\begin{equation*}
\begin{aligned}
X&=AWXWX=(AW)^2(XW)^2X=(AW)^k(XW)^kX\\
&=YW(AW)^{k+1}(XW)^{k}X=Y(WA)^{k+1}(WX)^{k+1}=Y(WA)^{k+2}(WX)^{k+2}\\
&=Y[(WA)^{k+2}(WY)^{k+2}(WA)^{k+2}](WX)^{k+2}\\
&=Y[(WA)^{k+2}(WY)^{k+2}]^*[(WA)^{k+2}(WX)^{k+2}]^*\\
&=Y[(WY)^{k+2}]^*[(WA)^{k+2}(WX)^{k+2}(WA)^{k+2}]^*=Y[(WY)^{k+2}]^*[(WA)^{k+2}]^*\\
&=Y(WAWY)^*=YWAWY=Y(WA)^{k+1}(WY)^{k+1}=YW(AW)^{k+1}(YW)^kY\\
&=(AW)^k(YW)^kY=AWYWY=Y.
\end{aligned}
\end{equation*}
This completes the proof.
\end{proof}

\begin{thm}\label{2.2} Let $A,~X\in \mathbb{C}^{m\times n},~W\in \mathbb{C}^{n\times m}$ and $k=\mathrm{max}\{{\rm ind}(AW),{\rm ind}(WA)\}$. Then  the following are equivalent:\\
$(1)$~$A^{{\scriptsize{\textcircled{\tiny \dag}}}, W}=X;$\\
$(2)$~$XW(AW)^{k+1}=(AW)^k,~AWXWX=X~\text{and}~(WAWX)^*=WAWX.$ 
\end{thm}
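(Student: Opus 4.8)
The plan is to prove Theorem~\ref{2.2} by combining Theorem~\ref{2.1} with the defining property~(1.2) of the $W$-weighted core-EP inverse. The implication $(2)\Rightarrow(1)$ is essentially free: Theorem~\ref{2.1} already establishes that the system in~(2.1) has a \emph{unique} solution, namely $X=A[(WA)^{\scriptsize{\textcircled{\tiny \dag}}}]^2$, so if $X$ satisfies~(2) then necessarily $X=A[(WA)^{\scriptsize{\textcircled{\tiny \dag}}}]^2$, and it remains only to identify this matrix with $A^{\scriptsize{\textcircled{\tiny \dag}},W}$. For the converse $(1)\Rightarrow(2)$, by the same uniqueness it suffices to show that $A^{\scriptsize{\textcircled{\tiny \dag}},W}$ itself satisfies the three equations in~(2.1); equivalently, again, it suffices to prove the single identity
\begin{equation*}
A^{\scriptsize{\textcircled{\tiny \dag}},W}=A\bigl[(WA)^{\scriptsize{\textcircled{\tiny \dag}}}\bigr]^2 .
\end{equation*}
Thus the whole theorem reduces to verifying this one formula, after which both directions follow from Theorem~\ref{2.1}.

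To establish the formula, I would start from the representation~(1.3), $A^{\scriptsize{\textcircled{\tiny \dag}},W}=\bigl[W(AW)^{l+1}[(AW)^l]^{\dag}\bigr]^{\dag}$ valid for all $l\geq k$, and rewrite the bracketed matrix using the core-EP machinery of Lemma~\ref{x}. The key manipulations are the intertwining relations $A(WA)=(AW)A$, hence $A(WA)^D=(AW)^DA$ and more generally $A\,f(WA)=f(AW)\,A$ for any power series, together with $AA^{\scriptsize{\textcircled{\tiny \dag}}}=A^{l}(A^{l})^{\dag}$ for $l\geq\operatorname{ind}(A)$, applied to $AW$ and to $WA$. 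These are exactly the tools already used inside the proof of Theorem~\ref{2.1} to show that $X=A[(WA)^{\scriptsize{\textcircled{\tiny \dag}}}]^2$ satisfies $XW(AW)^{k+1}=(AW)^k$ and the symmetry condition. Concretely, I would verify that $A[(WA)^{\scriptsize{\textcircled{\tiny \dag}}}]^2$ has range contained in $\mathcal{R}((AW)^k)$ — using $\mathcal{R}((WA)^{\scriptsize{\textcircled{\tiny \dag}}})=\mathcal{R}((WA)^k)$ from property~(1) of the core-EP inverse and the intertwining $A(WA)^k=(AW)^kA\cdot(\text{something})$ — and that $WAW\cdot A[(WA)^{\scriptsize{\textcircled{\tiny \dag}}}]^2=WA(WA)^{\scriptsize{\textcircled{\tiny \dag}}}=(WA)^k[(WA)^k]^{\dag}$, which is precisely the pair of conditions in~(1.2) that characterize $A^{\scriptsize{\textcircled{\tiny \dag}},W}$. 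The last equality uses $WA(WA)^{\scriptsize{\textcircled{\tiny \dag}}}=(WA)^{l}[(WA)^l]^{\dag}$ from Lemma~\ref{x}.

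An alternative, perhaps cleaner, route is to avoid~(1.3) entirely: take $X:=A[(WA)^{\scriptsize{\textcircled{\tiny \dag}}}]^2$, observe that the proof of Theorem~\ref{2.1} already shows $WAWX=WA(WA)^{\scriptsize{\textcircled{\tiny \dag}}}$, so $(WAWX)^*=WAWX$ gives the Hermitian condition and $WAWX=(WA)^k[(WA)^k]^{\dag}$ gives the first half of~(1.2); then check $\mathcal{R}(X)\subseteq\mathcal{R}((AW)^k)$ directly. For the range inclusion, write $X=A(WA)^{\scriptsize{\textcircled{\tiny \dag}}}(WA)^{\scriptsize{\textcircled{\tiny \dag}}}$; since $(WA)^{\scriptsize{\textcircled{\tiny \dag}}}$ is an outer inverse with range $\mathcal{R}((WA)^k)$, any vector in $\mathcal{R}(X)$ has the form $A(WA)^k z$ for some $z$, and $A(WA)^k=(AW)^kA$, so $\mathcal{R}(X)\subseteq\mathcal{R}((AW)^k)$. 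By the uniqueness in~(1.2) this forces $X=A^{\scriptsize{\textcircled{\tiny \dag}},W}$, completing the identification.

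I do not expect a serious obstacle here: the theorem is a repackaging of Theorem~\ref{2.1} plus the definition~(1.2), and every nontrivial computation needed is either already carried out in the proof of Theorem~\ref{2.1} or is a routine application of the $AW$–$WA$ intertwining identities and the range/projection properties of the core-EP inverse listed in the introduction. The only point requiring a little care is bookkeeping the exponents when passing between $(WA)^k$ and $(AW)^k$ (e.g.\ that $\operatorname{ind}(WA)$ and $\operatorname{ind}(AW)$ may differ but are both $\le k$, so Lemma~\ref{x} applies with $l=k$ to both $WA$ and $AW$), and making sure the range-inclusion argument uses the correct one-sided intertwining $A(WA)^k=(AW)^kA$.
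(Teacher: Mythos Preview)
Your proposal is correct, and the ``alternative, perhaps cleaner, route'' you describe in the third paragraph is exactly the paper's own proof: the paper reduces everything to showing that $X=A[(WA)^{\scriptsize{\textcircled{\tiny \dag}}}]^2$ satisfies condition~(1.2), then verifies $WAWX=WA(WA)^{\scriptsize{\textcircled{\tiny \dag}}}=(WA)^k[(WA)^k]^{\dag}$ via Lemma~\ref{x} and obtains the range inclusion by writing $X=A(WA)^k[(WA)^{\scriptsize{\textcircled{\tiny \dag}}}]^{k+2}=(AW)^kA[(WA)^{\scriptsize{\textcircled{\tiny \dag}}}]^{k+2}$ using the intertwining $A(WA)^k=(AW)^kA$. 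Your first proposed route through~(1.3) is correct but unnecessarily roundabout; the paper never invokes~(1.3) here.
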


\begin{proof} It suffices to show that $X=A[(WA)^{\scriptsize{\textcircled{\tiny \dag}}}]^2$ satisfies condition (1.2).
Indeed, 
\begin{equation*}
\begin{aligned}
&WAWA[(WA)^{\scriptsize{\textcircled{\tiny \dag}}}]^2=WA(WA)^{\scriptsize{\textcircled{\tiny \dag}}}=WA(WA)^D(WA)^k[(WA)^k]^{\dag}=(WA)^k[(WA)^k]^{\dag},
\\ 
&A[(WA)^{\scriptsize{\textcircled{\tiny \dag}}}]^2=AWA[(WA)^{\scriptsize{\textcircled{\tiny \dag}}}]^3
=A(WA)^k[(WA)^{\scriptsize{\textcircled{\tiny \dag}}}]^{k+2}=(AW)^kA[(WA)^{\scriptsize{\textcircled{\tiny \dag}}}]^{k+2}, \text{i.e.},\\
&\mathcal{R}(A[(WA)^{\scriptsize{\textcircled{\tiny \dag}}}]^2) \subseteq \mathcal{R}((AW)^k).
\end{aligned}
\end{equation*} 
This completes the proof.
\end{proof}


\vspace{4mm}

We now give the canonical form for the $W$-weighted core-EP inverse of $A$ by using the singular value decompositions of $A$ and $W$. Let $A\in \mathbb{C}_r^{m\times n},~W\in \mathbb{C}_s^{n\times m}$ be of the following singular value decompositions:
\begin{equation}
A=U\begin{pmatrix}
       \Sigma_{1} & 0\\
       0 &0
      \end{pmatrix}V^*~\text{and}~W=S\begin{pmatrix}
       \Sigma_{2} & 0\\
       0 &0
      \end{pmatrix}T^*,
\end{equation}
where $U,~V,~S,~T$ are unitary matrices, $\Sigma_{1}=$diag$(\sigma_1,\cdots,\sigma_r),~\sigma_1\geq \cdots \geq \sigma_r>0$, entries  $\sigma_i$  are known as the singular values of 
$A$,  and $\Sigma_{2}=$diag$(\tau_1,\cdots,\tau_s),~\tau_1\geq \cdots \geq \tau_s>0$, entries $\tau_i$ are   singular values of 
$W$. 

\begin{thm}~Let $A\in \mathbb{C}_r^{m\times n},~W\in \mathbb{C}_s^{n\times m}$ be of the singular value decompositions as in $(2.2)$. Then 
\begin{equation}
A^{{\scriptsize{\textcircled{\tiny \dag}}}, W}=U\begin{bmatrix}
                                                         \Sigma_{1}H_1[(\Sigma_{2}R_1\Sigma_{1}H_1)^{\scriptsize{\textcircled{\tiny \dag}}}]^2&0\\
                                                                       0&0
                                                                       \end{bmatrix}S^*,
\end{equation}
where $T^*U=\begin{bmatrix}
                              R_1&R_2\\
                               R_3&R_4
                       \end{bmatrix},~R_1\in \mathbb{C}^{s\times r},~
                       V^*S=\begin{bmatrix}
                              H_1 & H_2\\
                               H_3 & H_4
                       \end{bmatrix},~H_1\in \mathbb{C}^{r\times s}$.
\end{thm}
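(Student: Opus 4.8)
The plan is to reduce the weighted core-EP inverse of $A$ to an ordinary (unweighted) core-EP computation, and for this I would use the product formula from Theorem~\ref{2.2}, namely $A^{\scriptsize{\textcircled{\tiny \dag}},W}=A[(WA)^{\scriptsize{\textcircled{\tiny \dag}}}]^2$. So the whole computation rests on first getting a clean form for $WA$ out of the singular value decompositions \eqref{2.2}, then computing $(WA)^{\scriptsize{\textcircled{\tiny \dag}}}$, then left-multiplying by $A$. I would begin by substituting the SVDs into $WA$: writing $WA = S\begin{pmatrix}\Sigma_2&0\\0&0\end{pmatrix}T^*U\begin{pmatrix}\Sigma_1&0\\0&0\end{pmatrix}V^*$ and inserting the block partition $T^*U=\begin{bmatrix}R_1&R_2\\R_3&R_4\end{bmatrix}$ gives, after multiplying out the boundary zero blocks, $WA = S\begin{bmatrix}\Sigma_2 R_1\Sigma_1 & 0\\0&0\end{bmatrix}V^*$. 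The key structural point is that $S$ and $V$ are unitary, so $WA$ is \emph{unitarily similar} — no, not similar, but unitarily equivalent in a one-sided sense — to a block matrix. More precisely, since $V^*S$ is what bridges the two sides, I would rewrite $WA = S\begin{bmatrix}\Sigma_2 R_1\Sigma_1 & 0\\0&0\end{bmatrix}(V^*S)^* S^*$; absorbing $V^*S=\begin{bmatrix}H_1&H_2\\H_3&H_4\end{bmatrix}$, one sees $WA$ is conjugate by the unitary $S$ to $M:=\begin{bmatrix}\Sigma_2 R_1\Sigma_1 & 0\\0&0\end{bmatrix}\begin{bmatrix}H_1^*&H_3^*\\H_2^*&H_4^*\end{bmatrix}=\begin{bmatrix}\Sigma_2 R_1\Sigma_1 H_1^* & \Sigma_2 R_1\Sigma_1 H_3^*\\0&0\end{bmatrix}$. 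Hmm — this is upper block triangular, not of the form I want. Let me instead keep $WA = S N S^*$ with $N = \begin{bmatrix}\Sigma_2 R_1\Sigma_1 & 0\\0&0\end{bmatrix}(V^*S)^*$, so $WA$ and $N$ are unitarily similar and hence $(WA)^{\scriptsize{\textcircled{\tiny \dag}}} = S N^{\scriptsize{\textcircled{\tiny \dag}}} S^*$ by the unitary covariance of the core-EP inverse.

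Next I would pin down $N^{\scriptsize{\textcircled{\tiny \dag}}}$. The cleaner route is to note that since the bottom block rows of the left factor vanish, $N$ has range contained in the span of the first $s$ coordinates, and in fact $\mathcal{R}(N^j)\subseteq \mathcal{R}\big(\begin{smallmatrix}I_s\\0\end{smallmatrix}\big)$ for all $j\ge 1$; this is exactly the situation that makes the core-EP inverse computable from the top-left corner. Writing $N=\begin{bmatrix}P&0\\0&0\end{bmatrix}\begin{bmatrix}H_1^*&H_3^*\\H_2^*&H_4^*\end{bmatrix}$ with $P=\Sigma_2 R_1\Sigma_1$ — no, this still has the triangular obstruction. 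The genuinely clean observation, and the one I would use, is different: write everything in terms of the \emph{compressed} matrix $B:=\Sigma_2 R_1\Sigma_1 H_1$, which is the $s\times s$ block appearing inside the statement's core-EP symbol. I claim $WA$ is related to $B$ in a way that transports core-EP inverses. Indeed $N = \begin{bmatrix}P H_1^* & PH_3^*\\0&0\end{bmatrix}$ where $P=\Sigma_2 R_1\Sigma_1$; but the relevant observation for the core-EP inverse (via Lemma~\ref{x}: $X^{\scriptsize{\textcircled{\tiny \dag}}}=X^D X^l (X^l)^\dagger$) is that $N^l$ for $l\ge$ ind$(N)$ has the form $\begin{bmatrix}(PH_1^*)^{l-1}P H_1^* & (PH_1^*)^{l-1}PH_3^*\\0&0\end{bmatrix}$, so $N^l(N^l)^\dagger$ is the orthogonal projector onto $\mathcal{R}\big((PH_1^*)^{l-1}[PH_1^*\ \ PH_3^*]\big)$, which — since $\begin{bmatrix}H_1^*&H_3^*\\H_2^*&H_4^*\end{bmatrix}$ is a block of a unitary and hence has orthonormal rows — reduces to the projector onto $\mathcal{R}\big((PH_1^*)^{l-1}P\big) = \mathcal{R}\big((\Sigma_2 R_1\Sigma_1 H_1^*)^{l-1}\Sigma_2 R_1\Sigma_1\big)$ sitting in the first $s$ coordinates.

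Given that simplification, the Drazin part $N^D$ likewise reduces: $N^D = \begin{bmatrix}(PH_1^*)^D\cdot(\text{correction}) & *\\0&0\end{bmatrix}$, and carrying out $N^{\scriptsize{\textcircled{\tiny \dag}}}=N^D N^l(N^l)^\dagger$ I expect all the off-block-diagonal $H_2,H_3,H_4$ pieces to cancel against the unitarity relations $H_1H_1^*+H_2H_2^*=I$ etc., leaving $N^{\scriptsize{\textcircled{\tiny \dag}}}=\begin{bmatrix}(\Sigma_2 R_1\Sigma_1 H_1)^{\scriptsize{\textcircled{\tiny \dag}}}&0\\0&0\end{bmatrix}$ up to the identification $B=\Sigma_2 R_1\Sigma_1 H_1$. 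Then $(WA)^{\scriptsize{\textcircled{\tiny \dag}}}=S N^{\scriptsize{\textcircled{\tiny \dag}}}S^*$, $[(WA)^{\scriptsize{\textcircled{\tiny \dag}}}]^2 = S (N^{\scriptsize{\textcircled{\tiny \dag}}})^2 S^* = S\begin{bmatrix}(B^{\scriptsize{\textcircled{\tiny \dag}}})^2&0\\0&0\end{bmatrix}S^*$, and finally
\[
A[(WA)^{\scriptsize{\textcircled{\tiny \dag}}}]^2 = U\begin{pmatrix}\Sigma_1&0\\0&0\end{pmatrix}V^*S\begin{bmatrix}(B^{\scriptsize{\textcircled{\tiny \dag}}})^2&0\\0&0\end{bmatrix}S^* = U\begin{bmatrix}\Sigma_1 H_1 (B^{\scriptsize{\textcircled{\tiny \dag}}})^2&0\\0&0\end{bmatrix}S^*,
\]
which is exactly \eqref{2.3} once one writes $B=\Sigma_2 R_1\Sigma_1 H_1$ and recalls $[(\cdot)^{\scriptsize{\textcircled{\tiny \dag}}}]^2=(B^{\scriptsize{\textcircled{\tiny \dag}}})^2$. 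The main obstacle I anticipate is precisely the middle step — showing that $N^{\scriptsize{\textcircled{\tiny \dag}}}$ collapses to the stated $s\times s$ corner: this requires handling the block-triangular (not block-diagonal) structure of $N$ and carefully using the orthonormality of the rows of $V^*S$ (i.e.\ $H_1H_1^*+H_2H_2^*=I_r$, $H_1H_3^*+H_2H_4^*=0$) to kill the spurious blocks in the Moore--Penrose projectors $N^l(N^l)^\dagger$; everything else is bookkeeping with unitary covariance of the core-EP inverse and the identity $A(WA)^{D}=(AW)^{D}A$ already used in the proof of Theorem~\ref{2.1}.
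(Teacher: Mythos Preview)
Your overall strategy --- use $A^{\scriptsize{\textcircled{\tiny \dag}},W}=A[(WA)^{\scriptsize{\textcircled{\tiny \dag}}}]^2$, conjugate $WA$ by the unitary $S$, compute the core-EP inverse of the resulting block matrix, then left-multiply by $A$ --- is exactly the route the paper takes. But there is a slip in your conjugation step: from $WA = S\begin{bmatrix}\Sigma_2R_1\Sigma_1&0\\0&0\end{bmatrix}V^*$ you must insert $V^{*}=(V^{*}S)S^{*}$, not $(V^{*}S)^{*}S^{*}$. The correct middle factor is $V^{*}S$ itself, so
\[
N \;=\; \begin{bmatrix}\Sigma_2R_1\Sigma_1&0\\0&0\end{bmatrix}\begin{bmatrix}H_1&H_2\\H_3&H_4\end{bmatrix}
\;=\;\begin{bmatrix}\Sigma_2R_1\Sigma_1 H_1 & \Sigma_2R_1\Sigma_1 H_2\\0&0\end{bmatrix},
\]
and the $(1,1)$ block is already $B=\Sigma_2R_1\Sigma_1H_1$, not $PH_1^{*}$. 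With your transposed version you would be chasing $(PH_1^{*})^{\scriptsize{\textcircled{\tiny \dag}}}$ and the unitarity relations you invoke ($H_1H_1^{*}+H_2H_2^{*}=I_r$, etc.) would not match the blocks actually present; you would not arrive at $(\Sigma_2R_1\Sigma_1H_1)^{\scriptsize{\textcircled{\tiny \dag}}}$.

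Once that is fixed, the remaining difference from the paper is methodological. You propose to obtain $N^{\scriptsize{\textcircled{\tiny \dag}}}=\begin{bmatrix}B^{\scriptsize{\textcircled{\tiny \dag}}}&0\\0&0\end{bmatrix}$ via Lemma~\ref{x}, i.e.\ by computing $N^{D}$ and the projector $N^{l}(N^{l})^{\dagger}$ separately for the block upper-triangular $N$ and arguing that the off-diagonal piece $PH_2$ is annihilated. This can be made to work, but it is exactly the ``obstacle'' you flag, and the paper sidesteps it entirely. The paper simply posits $(WA)^{\scriptsize{\textcircled{\tiny \dag}}}=S\begin{bmatrix}X_1&X_2\\X_3&X_4\end{bmatrix}S^{*}$ and reads off the blocks from the three defining equations~(1.1): the equation $WAX^{2}=X$ forces $X_3=X_4=0$ (the second block row of $WAX$ vanishes); Hermiticity of $WAX$ gives $BX_2=0$ and $(BX_1)^{*}=BX_1$; the power equation $X(WA)^{k+1}=(WA)^{k}$ yields $X_1B^{k+1}=B^{k}$; together with $BX_1^{2}=X_1$ this identifies $X_1=B^{\scriptsize{\textcircled{\tiny \dag}}}$, and then the outer-inverse relation gives $X_2=X_1BX_2=0$. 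No Drazin inverse of a block-triangular matrix and no projector bookkeeping are needed. Your final left-multiplication by $A$ is carried out correctly and matches the paper.
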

\begin{proof}~
Observe that $WA=S\begin{bmatrix}
                        \Sigma_{2}&0\\
                        0&0
                    \end{bmatrix}T^*U\begin{bmatrix}
                        \Sigma_{1}&0\\
                        0&0
                    \end{bmatrix}V^*=S\begin{bmatrix}
                        \Sigma_{2}R_1 \Sigma_{1}&0\\
                        0&0
                    \end{bmatrix}V^*$. Now suppose that ind$(WA)=k$ and $X=S\begin{bmatrix}
                              X_1 & X_2\\
                               X_3 & X_4
                       \end{bmatrix}S^*~(X_1\in \mathbb{C}^{s\times s})$ is the core-EP inverse of $WA$, then $X$ would satisfy condition (1.1).                      
Thus, by computation,
\begin{equation*}
\begin{aligned}
&(\Sigma_{2}R_1\Sigma_{1}H_1X_1)^*=\Sigma_{2}R_1\Sigma_{1}H_1X_1,~\Sigma_{2}R_1\Sigma_{1}H_1X_2=0,\\
&\Sigma_{2}R_1\Sigma_{1}H_1X_1^2=X_1,
~X_3=X_4=0,\\
&X_1\Sigma_{2}R_1\Sigma_{1}(H_1\Sigma_{2}R_1\Sigma_{1})^k=\Sigma_{2}R_1\Sigma_{1}(H_1\Sigma_{2}R_1\Sigma_{1})^{k-1},~\text{which~implies~that~}\\
&X_1(\Sigma_{2}R_1\Sigma_{1}H_1)^{k+1}=(\Sigma_{2}R_1\Sigma_{1}H_1)^{k}.
\end{aligned}
\end{equation*}
These equalities above show that $X_1=(\Sigma_{2}R_1\Sigma_{1}H_1)^{\scriptsize{\textcircled{\tiny \dag}}}$.   As the core-EP inverse is an outer inverse, i.e., $XWAX=X$, then $X_2=X_1\Sigma_{2}R_1\Sigma_{1}H_1X_2=0$. Hence, $$(WA)^{\scriptsize{\textcircled{\tiny \dag}}}=S\begin{bmatrix}
                              (\Sigma_{2}R_1\Sigma_{1}H_1)^{\scriptsize{\textcircled{\tiny \dag}}} & 0\\
                               0 & 0
                       \end{bmatrix}S^*.$$
In light of Theorems \ref{2.1} and \ref{2.2},  $A^{{\scriptsize{\textcircled{\tiny \dag}}}, W}=A[(WA)^{\scriptsize{\textcircled{\tiny \dag}}}]^2=U\begin{bmatrix}
                                                         \Sigma_{1}H_1[(\Sigma_{2}R_1\Sigma_{1}H_1)^{\scriptsize{\textcircled{\tiny \dag}}}]^2&0\\
                                                                       0&0
                                                                       \end{bmatrix}S^*$. This completes the proof.
\end{proof}

\vspace{4mm}

Additional representations of the weighted core-EP inverse  can be obtained through the full-rank decomposition.
First, let us recall a concerned notion.
In 1974, Ben-Israel and Greville \cite{D1974} introduced the notion of generalized inverse with prescribed range and null space. 
Let $A\in  \mathbb{C}_r^{m\times n}$, $T$ be a subspace of $\mathbb{C}^{n}$ of dimension $s\leq r$ and let $S$ be  a subspace of $\mathbb{C}^{m}$ of dimension $m-s$. If $A$ has a $\{2\}$-inverse $X$ such that $\mathcal{R}(X) = T$ and $\mathcal{N}(X) = S$, then $X$ is unique and denoted by 
$A^{(2)}_{T,S}$. 
Further, Sheng and Chen \cite{S2007}  gave a full-rank representation of the generalized inverse $A^{(2)}_{T,S}$, which 
is based on an arbitrary full-rank decomposition of $G$, where $G$ is a matrix such that $\mathcal{R}(G) = T$ and $\mathcal{N}(G) = S$. 
\begin{lem}\label{S2007}\emph{\cite[Theorem 3.1]{S2007}}~Let $A\in \mathbb{C}_r^{m\times n}$, $T$ be a subspace of $\mathbb{C}^{n}$ of dimension $s\leq r$ and let $S$ be  a subspace of $\mathbb{C}^{m}$ of dimension $m-s$. Suppose that $G\in \mathbb{C}^{n\times m}$ satisfies $\mathcal{R}(G)=T,~\mathcal{N}(G)=S$. Let $G$ be of an arbitrary full-rank decomposition, namely $G=UV$. If $A$ has a $\{2\}$-inverse $A^{(2)}_{T,S}$, then \\
$(1)~VAU~\text{is~invertible};$\\
$(2)~A^{(2)}_{T,S}=U(VAU)^{-1}V.$
\end{lem}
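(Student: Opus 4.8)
The statement to prove is Lemma~\ref{S2007}: given $A\in\mathbb{C}_r^{m\times n}$, a target range $T\subseteq\mathbb{C}^n$ of dimension $s\le r$, a target null space $S\subseteq\mathbb{C}^m$ of dimension $m-s$, a matrix $G\in\mathbb{C}^{n\times m}$ with $\mathcal{R}(G)=T$ and $\mathcal{N}(G)=S$, a full-rank factorization $G=UV$ with $U\in\mathbb{C}^{n\times s}$ of full column rank and $V\in\mathbb{C}^{s\times m}$ of full row rank, and the hypothesis that $A^{(2)}_{T,S}$ exists, one must show (1) $VAU$ is invertible and (2) $A^{(2)}_{T,S}=U(VAU)^{-1}V$. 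The plan is to first establish invertibility of the $s\times s$ matrix $VAU$, and then verify directly that $X:=U(VAU)^{-1}V$ is a $\{2\}$-inverse of $A$ with the prescribed range and null space; uniqueness of $A^{(2)}_{T,S}$ (already recalled in the paragraph preceding the lemma) then forces the identification.

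For step (1), I would argue as follows. Since $U$ has full column rank $s$, the map $U$ is injective and $\mathcal{R}(U)=\mathcal{R}(G)=T$; since $V$ has full row rank $s$, the map $V$ is surjective with $\mathcal{N}(V)=\mathcal{N}(G)=S$. To show the $s\times s$ matrix $VAU$ is invertible it suffices to show it is injective, i.e. that $VAUx=0$ implies $x=0$. Let $X_0:=A^{(2)}_{T,S}$, so $X_0AX_0=X_0$, $\mathcal{R}(X_0)=T=\mathcal{R}(U)$ and $\mathcal{N}(X_0)=S=\mathcal{N}(V)$. Because $\mathcal{R}(X_0)=\mathcal{R}(U)$ and $U$ is injective, there is a matrix $M$ with $X_0=UM$; because $\mathcal{N}(X_0)=\mathcal{N}(V)$ and $V$ is surjective, $X_0$ factors through $V$, giving $X_0=NV$ for some $N$. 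From $X_0AX_0=X_0$ we get $UM A NV=UMV'$-type identities; more cleanly, writing $X_0=UM=NV$, the idempotent relation $X_0AX_0=X_0$ becomes $NVAUM=NV$, and since $N$ is left-invertible and $V$ is right-invertible (they have full column, resp. row, rank $s$), cancelling yields $VAUM=V$ and then $VAU(MV)=V$, hence multiplying appropriately $VAU$ has a one-sided inverse among $s\times s$ matrices, hence is invertible. The main obstacle is handling these cancellations carefully: one needs that $M$ (the coordinate representation of $X_0$ in the basis $U$) itself has full row rank $s$ and $N$ has full column rank $s$, which follows from $\dim\mathcal{R}(X_0)=s$; I would make this precise by noting $\operatorname{rank}(X_0)=s$ and $\operatorname{rank}(UM)=\operatorname{rank}(M)$, $\operatorname{rank}(NV)=\operatorname{rank}(N)$ because $U,V$ have rank $s$.

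For step (2), set $X:=U(VAU)^{-1}V$, which is well-defined by step (1). First, $\mathcal{R}(X)\subseteq\mathcal{R}(U)=T$, and since $(VAU)^{-1}V$ is surjective onto $\mathbb{C}^s$ and $U$ is injective, $\operatorname{rank}(X)=s$, so $\mathcal{R}(X)=T$. Dually, $\mathcal{N}(X)\supseteq\mathcal{N}(V)=S$, and a rank count ($\operatorname{rank}(X)=s$ forces $\dim\mathcal{N}(X)=m-s=\dim S$) gives $\mathcal{N}(X)=S$. Finally the $\{2\}$-condition: $XAX=U(VAU)^{-1}V\,A\,U(VAU)^{-1}V=U(VAU)^{-1}(VAU)(VAU)^{-1}V=U(VAU)^{-1}V=X$. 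Thus $X$ is a $\{2\}$-inverse with range $T$ and null space $S$; by the uniqueness of such an inverse (recalled just above the lemma), $X=A^{(2)}_{T,S}$, proving (2). I expect step (1) — nailing down invertibility of $VAU$ from the mere existence of $A^{(2)}_{T,S}$ via the factorizations $X_0=UM=NV$ — to be the only genuinely delicate point; step (2) is a routine direct verification once (1) is in hand.
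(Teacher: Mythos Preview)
The paper does not supply its own proof of this lemma: it is quoted verbatim as \cite[Theorem~3.1]{S2007} and used as a black box, so there is no paper proof to compare against. Your argument must therefore be assessed on its own.

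Your step (2) is clean and correct: once $VAU$ is invertible, the verification that $X=U(VAU)^{-1}V$ satisfies $XAX=X$, $\mathcal{R}(X)=T$, $\mathcal{N}(X)=S$ is exactly as you wrote, and uniqueness of $A^{(2)}_{T,S}$ finishes it.

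Step (1) has the right idea but a small slip. With $U\in\mathbb{C}^{n\times s}$, $V\in\mathbb{C}^{s\times m}$, the factorizations $X_0=UM=NV$ force $M\in\mathbb{C}^{s\times m}$ and $N\in\mathbb{C}^{n\times s}$, and your rank argument correctly gives $\operatorname{rank}(M)=\operatorname{rank}(N)=s$. From $X_0AX_0=X_0$ you obtain $N(VAU)M=NV$, and left-cancelling $N$ yields $(VAU)M=V$. But the expression ``$VAU(MV)=V$'' is ill-formed ($M$ and $V$ are both $s\times m$); what you want is to right-multiply by any right inverse $V^{-}$ of $V$ (which exists since $V$ has full row rank) to get $(VAU)(MV^{-})=I_s$, whence $VAU$ is invertible.

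A slightly slicker route for (1), avoiding the bookkeeping with $M$ and $N$: if $VAUx=0$ then $AUx\in\mathcal{N}(V)=S=\mathcal{N}(X_0)$, so $X_0AUx=0$; but $Ux\in\mathcal{R}(U)=T=\mathcal{R}(X_0)$, say $Ux=X_0z$, and then $0=X_0AUx=X_0AX_0z=X_0z=Ux$, forcing $x=0$ by injectivity of $U$.
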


The following result shows that the weighted core-EP inverse  is a generalized inverse with prescribed range and null space.
\begin{thm}\label{2.8}~Let $A\in \mathbb{C}^{m\times n},~W\in \mathbb{C}^{n\times m}$ with ${\rm ind}(WA)=k$. The W-weighted core-EP inverse of $A$ is a $\{2\}$-inverse of $WAW$ with the range $\mathcal{R}(A(WA)^k[(WA)^{k}]^{\dag})$ and the null space $\mathcal{N}(A(WA)^k[(WA)^{k}]^{\dag})$ i.e.,  
\begin{equation}
A^{{\scriptsize{\textcircled{\tiny \dag}}}, W}=(WAW)^{(2)}_{\mathcal{R}(G), \mathcal{N}(G)},
\end{equation}
where $G=A(WA)^k[(WA)^{k}]^{\dag}$.
\end{thm}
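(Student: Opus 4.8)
The plan is to verify the three defining ingredients of the generalized inverse $(WAW)^{(2)}_{\mathcal{R}(G),\mathcal{N}(G)}$ for the candidate $X=A^{\scriptsize{\textcircled{\tiny \dag}},W}=A[(WA)^{\scriptsize{\textcircled{\tiny \dag}}}]^2$, namely that $X$ is a $\{2\}$-inverse of $WAW$, that $\mathcal{R}(X)=\mathcal{R}(G)$, and that $\mathcal{N}(X)=\mathcal{N}(G)$, where $G=A(WA)^k[(WA)^k]^{\dag}$. Since $A^{\scriptsize{\textcircled{\tiny \dag}},W}$ is the explicit matrix $A[(WA)^{\scriptsize{\textcircled{\tiny \dag}}}]^2$ by Theorems \ref{2.1} and \ref{2.2}, the whole argument reduces to bookkeeping with the core-EP identities of Lemma \ref{x} applied to the square matrix $WA$ (of index $k$), together with the intertwining relation $A(WA)^D=(AW)^DA$ already exploited above.

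First I would show $X(WAW)X=X$: expanding $A[(WA)^{\scriptsize{\textcircled{\tiny \dag}}}]^2\,(WAW)\,A[(WA)^{\scriptsize{\textcircled{\tiny \dag}}}]^2$ and collapsing the middle $WA(WA)^{\scriptsize{\textcircled{\tiny \dag}}}WA$ via the outer-inverse property $(WA)^{\scriptsize{\textcircled{\tiny \dag}}}(WA)(WA)^{\scriptsize{\textcircled{\tiny \dag}}}=(WA)^{\scriptsize{\textcircled{\tiny \dag}}}$ (equivalently, using $AWXWX=X$ from (2.1)), which gives back $A[(WA)^{\scriptsize{\textcircled{\tiny \dag}}}]^2=X$; this is essentially the second equation of Theorem \ref{2.1} read with $WAW$ in place of $AW\cdot$. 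Next, for the range: from the computation in the proof of Theorem \ref{2.2} we already have $A[(WA)^{\scriptsize{\textcircled{\tiny \dag}}}]^2=AWA[(WA)^{\scriptsize{\textcircled{\tiny \dag}}}]^3$, so $\mathcal{R}(X)\subseteq\mathcal{R}(AWA[(WA)^{\scriptsize{\textcircled{\tiny \dag}}}]^2)$, and conversely $AWA[(WA)^{\scriptsize{\textcircled{\tiny \dag}}}]^2=(WAW)X\cdot(\text{stuff})$ shows the reverse inclusion, so $\mathcal{R}(X)=\mathcal{R}((WAW)X)$; then I would identify $(WAW)X=WAWA[(WA)^{\scriptsize{\textcircled{\tiny \dag}}}]^2=WA(WA)^{\scriptsize{\textcircled{\tiny \dag}}}=(WA)^k[(WA)^k]^{\dag}$ (the first display in the proof of Theorem \ref{2.2}) and, left-multiplying the factorization $A[(WA)^{\scriptsize{\textcircled{\tiny \dag}}}]^2$ appropriately, conclude $\mathcal{R}(X)=\mathcal{R}(A(WA)^k[(WA)^k]^{\dag})=\mathcal{R}(G)$. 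For the null space I would compare dimensions: $X$ is a $\{2\}$-inverse, so $\mathrm{rank}(X)=\mathrm{rank}(XWAWX)$ and $\mathcal{N}(X)=\mathcal{N}(XWAW)$; since $X=A[(WA)^{\scriptsize{\textcircled{\tiny \dag}}}]^2$ kills a vector $y$ iff $[(WA)^{\scriptsize{\textcircled{\tiny \dag}}}]^2 y\in\mathcal{N}(A)$, and symmetrically $G^*=([(WA)^k]^{\dag})^*(WA)^{*k}A^*$ has the same kernel after transposing, one checks directly that $\mathcal{N}(X)=\mathcal{N}(WAWX)=\mathcal{N}((WA)^k[(WA)^k]^{\dag}) \cap (\text{image condition})=\mathcal{N}(G)$, the last equality because $G=X\cdot(WA)^{k+1}$-type relations tie the two kernels together.

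The main obstacle I anticipate is the null-space equality $\mathcal{N}(X)=\mathcal{N}(G)$. The range equality is nearly immediate from identities already displayed in the excerpt, and the $\{2\}$-inverse property is a short collapse, but matching null spaces requires showing both $\mathcal{N}(G)\subseteq\mathcal{N}(X)$ and a dimension (or rank) count forcing equality — the cleanest route is probably to prove $\mathrm{rank}(X)=\mathrm{rank}(G)=\mathrm{rank}((WA)^k)$ (using that $(WA)^k[(WA)^k]^{\dag}$ is an orthogonal projector of that rank and that $A$ acts injectively on the relevant subspace $\mathcal{R}((WA)^{\scriptsize{\textcircled{\tiny \dag}}})=\mathcal{R}((WA)^k)$, which needs the property that $WA$ restricted there is invertible), together with one inclusion, and then invoke the uniqueness clause of the $A^{(2)}_{T,S}$ construction (Lemma \ref{S2007}) to finish. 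Once the three ingredients are in place, equation (2.5) is just the definition of $(WAW)^{(2)}_{\mathcal{R}(G),\mathcal{N}(G)}$, and the proof is complete.
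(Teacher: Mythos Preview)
Your overall structure---verify the $\{2\}$-inverse property, then match range and null space---is the same as the paper's, and your $\{2\}$-inverse step is fine. But the range and null-space arguments you sketch are both more complicated than necessary and contain dimensional slips: you write $\mathcal{R}(X)=\mathcal{R}((WAW)X)$ and later $\mathcal{N}(X)=\mathcal{N}(XWAW)$, yet $X\in\mathbb{C}^{m\times n}$ while $(WAW)X\in\mathbb{C}^{n\times n}$ and $X(WAW)\in\mathbb{C}^{m\times m}$, so neither equality can hold as written. The rank-counting detour for the null space (arguing that $A$ is injective on $\mathcal{R}((WA)^k)$, comparing ranks, then invoking one inclusion) can be made to work, but it is not needed.

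The paper avoids all of this by proving both inclusions directly in each case, using only the identity $(WA)^{\scriptsize{\textcircled{\tiny\dag}}}=(WA)^D(WA)^k[(WA)^k]^{\dag}$ from Lemma~\ref{x} and the core-EP relation $(WA)[(WA)^{\scriptsize{\textcircled{\tiny\dag}}}]^2=(WA)^{\scriptsize{\textcircled{\tiny\dag}}}$. For the range: factor $G=A[(WA)^{\scriptsize{\textcircled{\tiny\dag}}}]^2(WA)^{k+2}[(WA)^k]^{\dag}$ to get $\mathcal{R}(G)\subseteq\mathcal{R}(X)$, and $X=A(WA)^k[(WA)^k]^{\dag}(WA)^k[(WA)^{\scriptsize{\textcircled{\tiny\dag}}}]^{k+2}$ to get $\mathcal{R}(X)\subseteq\mathcal{R}(G)$. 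For the null space: if $Gy=0$ then $Xy=A(WA)^{\scriptsize{\textcircled{\tiny\dag}}}[(WA)^D]^2W\cdot Gy=0$; if $Xy=0$ then $Gy=AWA(WA)^{\scriptsize{\textcircled{\tiny\dag}}}y=(AW)^2Xy=0$. Four one-line factorizations, no rank arguments, no projector bookkeeping. Your plan would eventually succeed after repairs, but the paper's route is considerably shorter and sidesteps the dimensional pitfalls you ran into.
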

\begin{proof}~First, we check that $A[(WA)^{{\scriptsize{\textcircled{\tiny \dag}}}}]^2$ is a $\{2\}$-inverse of $WAW$. Indeed, $$A[(WA)^{{\scriptsize{\textcircled{\tiny \dag}}}}]^2WAWA[(WA)^{{\scriptsize{\textcircled{\tiny \dag}}}}]^2=A[(WA)^{{\scriptsize{\textcircled{\tiny \dag}}}}]^2WA(WA)^{{\scriptsize{\textcircled{\tiny \dag}}}}=A[(WA)^{{\scriptsize{\textcircled{\tiny \dag}}}}]^2.$$
Then, we show that $\mathcal{R}(A(WA)^k[(WA)^{k}]^{\dag})=\mathcal{R}(A[(WA)^{{\scriptsize{\textcircled{\tiny \dag}}}}]^2)$ and $\mathcal{N}(A(WA)^k[(WA)^{k}]^{\dag})=\mathcal{N}(A[(WA)^{{\scriptsize{\textcircled{\tiny \dag}}}}]^2)$.
Indeed, 
\begin{equation*}
\begin{aligned}
&A(WA)^k[(WA)^{k}]^{\dag}=A[(WA)^{{\scriptsize{\textcircled{\tiny \dag}}}}]^2(WA)^{k+2}[(WA)^{k}]^{\dag}, \\
&\text{i.e.,}~\mathcal{R}(A(WA)^k[(WA)^{k}]^{\dag})\subseteq \mathcal{R}(A[(WA)^{{\scriptsize{\textcircled{\tiny \dag}}}}]^2);\\
&A[(WA)^{{\scriptsize{\textcircled{\tiny \dag}}}}]^2=A(WA)^{k}[(WA)^{{\scriptsize{\textcircled{\tiny \dag}}}}]^{k+2}=A(WA)^{k}[(WA)^{k}]^{\dag}(WA)^{k}[(WA)^{{\scriptsize{\textcircled{\tiny \dag}}}}]^{k+2}, \\
&\text{i.e.,}~\mathcal{R}(A[(WA)^{{\scriptsize{\textcircled{\tiny \dag}}}}]^2)\subseteq \mathcal{R}(A(WA)^k[(WA)^{k}]^{\dag}).
\end{aligned}
\end{equation*}
If $X\in \mathcal{N}(A(WA)^k[(WA)^{k}]^{\dag})$, i.e., $A(WA)^k[(WA)^{k}]^{\dag}X=0$, then 
\begin{equation*}
\begin{aligned}
A[(WA)^{{\scriptsize{\textcircled{\tiny \dag}}}}]^2X
&=A(WA)^{{\scriptsize{\textcircled{\tiny \dag}}}}(WA)^D(WA)^k[(WA)^k]^{\dag}X\\
&=A(WA)^{{\scriptsize{\textcircled{\tiny \dag}}}}[(WA)^D]^2WA(WA)^k[(WA)^k]^{\dag}X=0,
\end{aligned}
\end{equation*}
namely, $\mathcal{N}(A(WA)^k[(WA)^{k}]^{\dag}) \subseteq \mathcal{N}(A[(WA)^{{\scriptsize{\textcircled{\tiny \dag}}}}]^2)$;\\
if $X\in \mathcal{N}(A[(WA)^{{\scriptsize{\textcircled{\tiny \dag}}}}]^2)$, i.e., $A[(WA)^{{\scriptsize{\textcircled{\tiny \dag}}}}]^2X=0$, then 
\begin{equation*}
\begin{aligned}
A(WA)^k[(WA)^{k}]^{\dag}X&=AWA(WA)^{{\scriptsize{\textcircled{\tiny \dag}}}}X=AWAWA[(WA)^{{\scriptsize{\textcircled{\tiny \dag}}}}]^2X=0,
\end{aligned}
\end{equation*}
namely, $\mathcal{N}(A[(WA)^{{\scriptsize{\textcircled{\tiny \dag}}}}]^2)  \subseteq \mathcal{N}(A(WA)^k[(WA)^{k}]^{\dag}) $.\\
This completes the proof.
\end{proof}

From Theorem \ref{2.8}, it is known  that the weighted core-EP inverse is a particular outer inverse. Then
by applying Lemma \ref{S2007}, we  derive  new  representations of the weighted core-EP inverse involving the usual matrix inverse.
\begin{cor}\label{2.9}~Let $A\in \mathbb{C}^{m\times n},~W\in \mathbb{C}^{n\times m}$ with ${\rm ind}(WA)=k$.
If $A(WA)^k[(WA)^k]^{\dag}=UV$ is a full-rank decomposition of $A(WA)^k[(WA)^k]^{\dag}$. Then the $W$-weighted core-EP inverse of $A$ possesses the following representation:
\begin{equation}
A^{{\scriptsize{\textcircled{\tiny \dag}}}, W}=U(VWAWU)^{-1}V.
\end{equation}
\end{cor}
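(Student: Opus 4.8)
The plan is to combine Theorem~\ref{2.8} with Lemma~\ref{S2007}. By Theorem~\ref{2.8}, the $W$-weighted core-EP inverse of $A$ is precisely the outer inverse $(WAW)^{(2)}_{\mathcal{R}(G),\mathcal{N}(G)}$ with prescribed range $\mathcal{R}(G)$ and null space $\mathcal{N}(G)$, where $G = A(WA)^k[(WA)^k]^{\dag}$. So the matrix $G$ already plays exactly the role of the matrix called $G$ in Lemma~\ref{S2007}, and the ambient matrix there is $WAW \in \mathbb{C}^{m\times n}$. The only thing that needs checking before quoting Lemma~\ref{S2007} is that the hypotheses of that lemma are met, i.e.\ that $WAW$ actually has a $\{2\}$-inverse with range $T=\mathcal{R}(G)$ and null space $S=\mathcal{N}(G)$; but this is the content of Theorem~\ref{2.8}, so it is immediate.

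First I would fix the full-rank decomposition $G = UV$ given in the hypothesis, with $U$ of full column rank and $V$ of full row rank, so that $\mathcal{R}(G)=\mathcal{R}(U)$ and $\mathcal{N}(G)=\mathcal{N}(V)$. Then I would invoke Theorem~\ref{2.8} to assert that $A^{\scriptsize{\textcircled{\tiny \dag}},W} = (WAW)^{(2)}_{\mathcal{R}(U),\mathcal{N}(V)}$, i.e.\ that $WAW$ has the relevant $\{2\}$-inverse. With that in hand, part~(1) of Lemma~\ref{S2007} (applied with the ambient matrix $WAW$ in place of $A$) guarantees that $VWAWU$ is invertible, which is what makes the right-hand side $U(VWAWU)^{-1}V$ well-defined, and part~(2) of the same lemma gives exactly $(WAW)^{(2)}_{\mathcal{R}(U),\mathcal{N}(V)} = U(VWAWU)^{-1}V$. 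Chaining these two identities yields $A^{\scriptsize{\textcircled{\tiny \dag}},W} = U(VWAWU)^{-1}V$, which is~(2.6).

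There is essentially no hard step here — the proof is a direct substitution into the already-established Lemma~\ref{S2007} and Theorem~\ref{2.8}. The only mild point of care is bookkeeping with the role names: in Lemma~\ref{S2007} the letter $A$ denotes the matrix being inverted and $G$ the matrix carrying the prescribed subspaces, whereas here the matrix being inverted is $WAW$ and the subspace-carrier is $G = A(WA)^k[(WA)^k]^{\dag}$; one should also note that in the present setting the roles of $U,V$ in the decomposition $G=UV$ are free (any full-rank decomposition works), matching the "arbitrary full-rank decomposition" clause of Lemma~\ref{S2007}. I would also remark in passing that the dimension conditions of Lemma~\ref{S2007} are automatically satisfied, since $\mathcal{R}(G)$ and $\mathcal{N}(G)$ are the range and null space of the honest $\{2\}$-inverse produced in Theorem~\ref{2.8}, so there is nothing further to verify. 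Hence the corollary follows at once.
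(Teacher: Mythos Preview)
Your proposal is correct and matches the paper's own approach exactly: the corollary is stated immediately after Theorem~\ref{2.8} with the remark that it follows by applying Lemma~\ref{S2007}, and the paper gives no further argument. Your write-up simply spells out this substitution in detail, including the bookkeeping about which matrix plays the role of $A$ in Lemma~\ref{S2007}.
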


\vspace{4mm}

Recall that the general algebraic structures (GAS) of $A$ and $W$ are defined as follows (see \cite{W2003}):
\begin{equation}
\begin{aligned}
&A=P\begin{bmatrix}
          A_{11}&0\\
          0&A_{22}
          \end{bmatrix}Q^{-1},~W=Q\begin{bmatrix}
                                                        W_{11}  & 0\\
                                                               0 & W_{22}
                                                   \end{bmatrix}P^{-1},
\end{aligned}
\end{equation}
where $P,~Q,~A_{11},~W_{11}$ are non-singular matrices and $A_{22},~W_{22},~A_{22}W_{22},~W_{22}A_{22}$ are nilpotent matrices.
\begin{cor}~Let $A\in \mathbb{C}^{m\times n},~W\in \mathbb{C}^{n\times m}$ with ${\rm ind}(WA)=k$ and let  $P=\begin{bmatrix}
                                 P_1&P_2
                                 \end{bmatrix},~Q=\begin{bmatrix}
                                 L_1&L_2
                                 \end{bmatrix}$, where $P_1,~P_2,~L_1,~L_2$ are appropriate blocks arising from $(2.6)$.
Then the $W$-weighted core-EP inverse of $A$ possesses the following representation:
\begin{equation}
A^{{\scriptsize{\textcircled{\tiny \dag}}}, W}=P_1(L_1^*WAWP_1)^{-1}L_1^*.
\end{equation}
\end{cor}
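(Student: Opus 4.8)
The plan is to reduce the statement to an application of Corollary~\ref{2.9}. By that corollary, once we exhibit a full-rank decomposition $A(WA)^k[(WA)^k]^{\dag}=UV$, we immediately obtain $A^{{\scriptsize{\textcircled{\tiny \dag}}}, W}=U(VWAWU)^{-1}V$. So the whole task is to identify a concrete $U$ and $V$ built from the GAS blocks $P_1$ and $L_1$ and check that $UV$ equals $A(WA)^k[(WA)^k]^{\dag}$ and that $\operatorname{rank}U=\operatorname{rank}V$ equals the common rank, so that the factorization is genuinely full-rank.

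First I would compute $WA$ and its powers in the GAS coordinates. From $(2.6)$ one gets $WA=Q\,\mathrm{diag}(W_{11}A_{11},\,W_{22}A_{22})\,Q^{-1}$, and since $W_{22}A_{22}$ is nilpotent of index $\le k$, $(WA)^k=Q\,\mathrm{diag}((W_{11}A_{11})^k,\,0)\,Q^{-1}$. Writing $Q=\begin{bmatrix}L_1&L_2\end{bmatrix}$ and $Q^{-1}=\begin{bmatrix}M_1\\ M_2\end{bmatrix}$ blockwise, this is $(WA)^k=L_1(W_{11}A_{11})^kM_1$, a matrix whose column space is $\mathcal{R}(L_1)$ (because $(W_{11}A_{11})^k$ is invertible). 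Similarly $A=P\,\mathrm{diag}(A_{11},A_{22})\,Q^{-1}$ gives $A(WA)^k=P_1 A_{11}(W_{11}A_{11})^k M_1$, whose column space is $\mathcal{R}(P_1)$. The next step is to understand $[(WA)^k]^{\dag}$: since $(WA)^k=L_1 N M_1$ with $N=(W_{11}A_{11})^k$ invertible, a short computation (or an appeal to the known formula for the Moore–Penrose inverse of such a product, using that $L_1$ and $M_1$ have full column/row rank respectively) shows $(WA)^k[(WA)^k]^{\dag}$ is the orthogonal projector onto $\mathcal{R}(L_1)$, namely $L_1(L_1^*L_1)^{-1}L_1^*$. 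Therefore $G:=A(WA)^k[(WA)^k]^{\dag}=P_1 A_{11}(W_{11}A_{11})^k M_1 L_1 (L_1^*L_1)^{-1}L_1^*$. Now $M_1L_1=I$ (top-left block of $Q^{-1}Q$), so $G=P_1 A_{11}(W_{11}A_{11})^k (L_1^*L_1)^{-1}L_1^*$. This exhibits $G=U V$ with $U=P_1$ and $V=A_{11}(W_{11}A_{11})^k (L_1^*L_1)^{-1}L_1^*$; both factors have rank equal to the size of $A_{11}$ (equivalently $\operatorname{rank}(WA)^k$), so this is a full-rank decomposition.

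Applying Corollary~\ref{2.9} with this $U,V$ yields $A^{{\scriptsize{\textcircled{\tiny \dag}}}, W}=P_1\big(A_{11}(W_{11}A_{11})^k (L_1^*L_1)^{-1}L_1^* WAW P_1\big)^{-1}A_{11}(W_{11}A_{11})^k (L_1^*L_1)^{-1}L_1^*$. The final step is to simplify this to the claimed $P_1(L_1^*WAWP_1)^{-1}L_1^*$. Writing $C:=A_{11}(W_{11}A_{11})^k(L_1^*L_1)^{-1}$, which is an invertible square matrix, the bracketed factor is $C\,(L_1^* WAW P_1)$, so its inverse is $(L_1^*WAWP_1)^{-1}C^{-1}$, and the expression collapses to $P_1 (L_1^*WAWP_1)^{-1}C^{-1}\,C\,L_1^* = P_1(L_1^*WAWP_1)^{-1}L_1^*$, as desired. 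One should also note in passing that $L_1^*WAWP_1$ is indeed invertible — this follows from the invertibility of the bracketed quantity in Corollary~\ref{2.9} together with the invertibility of $C$.

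I expect the main obstacle to be the Moore–Penrose step: verifying cleanly that $(WA)^k[(WA)^k]^{\dag}$ is precisely the orthogonal projector onto $\mathcal{R}(L_1)$. The identity $MM^{\dag}=P_{\mathcal{R}(M)}$ is standard, so what really needs care is only bookkeeping with the GAS block structure and the relations $M_1L_1=I$, $M_1L_2=0$, etc.; everything after that is invertible-matrix algebra that telescopes. An alternative route, avoiding full-rank decompositions entirely, would be to verify directly that $X:=P_1(L_1^*WAWP_1)^{-1}L_1^*$ satisfies the three equations of Theorem~\ref{2.1} (or equivalently is the $\{2\}$-inverse of $WAW$ with the prescribed range and null space given in Theorem~\ref{2.8}); but piggy-backing on Corollary~\ref{2.9} is shorter.
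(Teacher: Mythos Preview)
Your approach is exactly the paper's: use the GAS form $(2.6)$ to produce a full-rank factorization of $G=A(WA)^k[(WA)^k]^{\dag}$, invoke Corollary~\ref{2.9}, and then cancel the invertible middle factor. There is, however, a slip in your computation of $G$: you multiply the expression $A(WA)^k=P_1A_{11}(W_{11}A_{11})^kM_1$ by the projector $(WA)^k[(WA)^k]^{\dag}=L_1(L_1^*L_1)^{-1}L_1^*$, but that product is $A(WA)^{2k}[(WA)^k]^{\dag}$, not $G$. The correct value (as in the paper) is
\[
G=A\cdot L_1(L_1^*L_1)^{-1}L_1^*=P_1A_{11}(L_1^*L_1)^{-1}L_1^*,
\]
using $Q^{-1}L_1=\bigl[\begin{smallmatrix}I\\0\end{smallmatrix}\bigr]$; the paper then takes $U=P_1A_{11}$ and $V=(L_1^*L_1)^{-1}L_1^*$. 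Your slip is harmless for the final formula, since the spurious invertible factor $(W_{11}A_{11})^k$ is absorbed into your $C$ and cancels in $U(VWAWU)^{-1}V$; but as written your $UV\neq G$, so the literal appeal to Corollary~\ref{2.9} requires the extra remark that $U(VWAWU)^{-1}V$ depends only on $\mathcal{R}(U)$ and $\mathcal{N}(V)$ (equivalently, fall back to Lemma~\ref{S2007}).
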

\begin{proof}～Suppose that 
                                 $Q^{-1}=\begin{bmatrix}
                                 Q_1\\
                                 Q_2
                                 \end{bmatrix}$. From the GAS representations (2.6), it follows that 
\begin{equation*}
\begin{aligned}
&(WA)^k=Q\begin{bmatrix}
          (W_{11}A_{11})^{k}&0\\
          0&0
          \end{bmatrix}Q^{-1}=L_1(W_{11}A_{11})^{k}Q_{1},\\ 
&[(WA)^k]^{\dag}=Q_1^*(Q_1Q_1^*)^{-1}[(W_{11}A_{11})^{k}]^{-1}(L_1^*L_1)^{-1}L_1^*,\\
&A(WA)^k=P_1A_{11}(W_{11}A_{11})^{k}Q_{1}~\text{and}\\
&A(WA)^k[(WA)^k]^{\dag}=P_1A_{11}(L_1^*L_1)^{-1}L_1^*.
\end{aligned}
\end{equation*}
Therefore, it is possible to use the full-rank decomposition $A(WA)^k[(WA)^k]^{\dag}=UV$, where
$$U=P_1A_{11}~\text{and}~V=(L_1^*L_1)^{-1}L_1^*.$$
Then by Corollary \ref{2.9}, we obtain $A^{{\scriptsize{\textcircled{\tiny \dag}}}, W}=P_1A_{11}[(L_1^*L_1)^{-1}L_1^*WAWP_1A_{11}]^{-1}(L_1^*L_1)^{-1}L_1^*=P_1(L_1^*WAWP_1)^{-1}L_1^*$. This completes the proof.
\end{proof}

\vspace{4mm}
The following representation of the weighted core-EP inverse  is based on the  
QR decomposition  defined as in \cite{M2017,S2012,W2002}.
\begin{cor}~Let $A\in \mathbb{C}^{m\times n},~W\in \mathbb{C}^{n\times m}$ with $\mathrm{ind}(WA)=k$, ${\rm rank}(WAW)=r$, ${\rm rank}[A(WA)^{k}[(WA)^k]^{\dag}]=s,~s\leq r$. Suppose that the QR decomposition of $A(WA)^{k}[(WA)^k]^{\dag}$ is of the form
$$A(WA)^{k}[(WA)^k]^{\dag}P=QR,$$
where $P$ is an $n\times n$ permutation matrix, $Q\in \mathbb{C}^{m\times m},~Q^*Q=I_m$ and $R\in \mathbb{C}_s^{m\times n}$ is an upper trapezoidal matrix. Assume that $P$ is chosen so that $Q$ and $R$ can be partitioned as 
$$Q=\begin{bmatrix}
Q_1&Q_2
\end{bmatrix},~R=\begin{bmatrix}
R_{11}&R_{12}\\
{\bf 0}&{\bf 0}
\end{bmatrix}=\begin{bmatrix}
R_{1}\\
{\bf 0}
\end{bmatrix},$$
where $Q_1$ consists of the first $s$ columns of $Q$ and $R_{11}\in \mathbb{C}^{s\times s}$ is non-singular. If $WAW$ has a $\{2\}$-inverse $(WAW)^{(2)}_{\mathcal{R}(G),\mathcal{N}(G)}=A^{{\scriptsize{\textcircled{\tiny \dag}}}, W}$, where $G=A(WA)^k[(WA)^{k}]^{\dag}$, then\\
$(1)~R_1P^*WAWQ_1~\text{is~an~invertible~matrix};$\\
$(2)~A^{{\scriptsize{\textcircled{\tiny \dag}}}, W}=Q_1(R_1P^*WAWQ_1)^{-1}R_1P^*;$\\
$(3)~A^{{\scriptsize{\textcircled{\tiny \dag}}}, W}=(WAW)^{(2)}_{\mathcal{R}(Q_1),\mathcal{N}(R_1P^*)};$\\
$(4)~A^{{\scriptsize{\textcircled{\tiny \dag}}}, W}=Q_1(Q_1^*A(WA)^k[(WA)^{k}]^{\dag}WAWQ_1)^{-1}Q_1^*A(WA)^k[(WA)^{k}]^{\dag}.$
\end{cor}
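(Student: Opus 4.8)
The plan is to read the displayed $QR$ factorization as an explicit full-rank decomposition of $G=A(WA)^{k}[(WA)^{k}]^{\dag}$ and then to feed it into Corollary~\ref{2.9} (which is itself Theorem~\ref{2.8} combined with Lemma~\ref{S2007}). Once that identification is made, all four items drop out, so the work is essentially bookkeeping about ranks, ranges and null spaces.

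First I would note that, since $P$ is a permutation matrix, $G=GPP^{*}=QRP^{*}$, and because the last $m-s$ rows of $R$ vanish we have $QR=Q_{1}R_{1}$, so that $G=Q_{1}(R_{1}P^{*})$. I then claim this is a full-rank decomposition. On the one hand $Q^{*}Q=I_{m}$ forces $Q_{1}^{*}Q_{1}=I_{s}$, so $Q_{1}\in\mathbb{C}^{m\times s}$ has orthonormal columns, in particular full column rank $s$. On the other hand $R_{1}=[\,R_{11}\ R_{12}\,]$ with $R_{11}\in\mathbb{C}^{s\times s}$ non-singular, hence $\mathrm{rank}(R_{1})=s$, and $P^{*}$ is invertible, so $R_{1}P^{*}\in\mathbb{C}^{s\times n}$ has full row rank $s$; since $\mathrm{rank}(G)=s$, $G=Q_{1}(R_{1}P^{*})$ is indeed a full-rank decomposition. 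Applying Corollary~\ref{2.9} with $U=Q_{1}$ and $V=R_{1}P^{*}$ immediately yields that $R_{1}P^{*}WAWQ_{1}$ is invertible, which is $(1)$, and that $A^{{\scriptsize{\textcircled{\tiny \dag}}}, W}=Q_{1}(R_{1}P^{*}WAWQ_{1})^{-1}R_{1}P^{*}$, which is $(2)$.

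For $(3)$ I would compute the range and null space of the matrix appearing in $(2)$. Writing $M=(R_{1}P^{*}WAWQ_{1})^{-1}\in\mathbb{C}^{s\times s}$, the map $MR_{1}P^{*}\colon\mathbb{C}^{n}\to\mathbb{C}^{s}$ is onto (since $R_{1}P^{*}$ is onto and $M$ is bijective) and $Q_{1}M$ is one-to-one (since $Q_{1}$ is one-to-one), whence $\mathcal{R}(Q_{1}MR_{1}P^{*})=\mathcal{R}(Q_{1})$ and $\mathcal{N}(Q_{1}MR_{1}P^{*})=\mathcal{N}(R_{1}P^{*})$. As $A^{{\scriptsize{\textcircled{\tiny \dag}}}, W}$ is a $\{2\}$-inverse of $WAW$ by Theorem~\ref{2.8}, this gives $A^{{\scriptsize{\textcircled{\tiny \dag}}}, W}=(WAW)^{(2)}_{\mathcal{R}(Q_{1}),\mathcal{N}(R_{1}P^{*})}$, i.e. $(3)$. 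Finally, for $(4)$ I would use $Q_{1}^{*}Q_{1}=I_{s}$ once more: from $G=Q_{1}R_{1}P^{*}$ it follows that $Q_{1}^{*}G=R_{1}P^{*}$, that is, $R_{1}P^{*}=Q_{1}^{*}A(WA)^{k}[(WA)^{k}]^{\dag}$; substituting this for both occurrences of $R_{1}P^{*}$ in $(2)$ produces exactly the representation in $(4)$.

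The one point that really needs care — and hence the main (mild) obstacle — is the verification that $G=Q_{1}(R_{1}P^{*})$ is a genuine full-rank decomposition: one must check that the block sizes match and that $\mathrm{rank}(Q_{1})=\mathrm{rank}(R_{1}P^{*})=s=\mathrm{rank}(G)$, since the invertibility in $(1)$ and the range/null space identifications in $(3)$–$(4)$ all rest on $Q_{1}$ having full column rank and $R_{1}P^{*}$ having full row rank. There is no analytic difficulty beyond that, and the four items follow in the order $(1)\to(2)\to(3)\to(4)$ as described.
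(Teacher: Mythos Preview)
Your proposal is correct and follows exactly the route the paper intends: the corollary is stated in the paper without proof, as an immediate consequence of Corollary~\ref{2.9} applied to the full-rank decomposition $G=Q_{1}(R_{1}P^{*})$ arising from the QR factorization (in the spirit of \cite{S2012,M2017}). Your verification that this is indeed a full-rank decomposition, the derivation of $(1)$--$(2)$ from Corollary~\ref{2.9}, the range/null space argument for $(3)$, and the substitution $R_{1}P^{*}=Q_{1}^{*}G$ for $(4)$ are all correct and are precisely what the paper leaves implicit.
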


\vspace{4mm}
Various generalized inverses of complex matrices can be finally expressed in terms of the matrix product as well as matrix powers involving only Moore-Penrose inverse, so can the weighted core-EP inverse. It is crucial since in that case the operation could be implemented easily by  Matlab.
The main disadvantage of the representation (1.3)  arises from the necessity to calculate  Moore-Penrose inverses of two different matrices.
The following result derives a  representation of $A^{{\scriptsize{\textcircled{\tiny \dag}}}, W}$, which involves only one Moore-Penrose inverse.
\begin{thm}\label{2.5}~Let $A\in \mathbb{C}^{m\times n},~W\in \mathbb{C}^{n\times m}$ and  let $l$ be a non-negative  integer such that $l\geq k=\mathrm{max}\{{\rm ind}(AW),{\rm ind}(WA)\}$.  Then  $A^{{\scriptsize{\textcircled{\tiny \dag}}}, W}$ can be written as follows:
\begin{equation}
A^{{\scriptsize{\textcircled{\tiny \dag}}}, W}=(AW)^l[W(AW)^{l+1}]^{\dag};
\end{equation}
\begin{equation}
A^{{\scriptsize{\textcircled{\tiny \dag}}}, W}=A(WA)^l[(WA)^{l+2}]^{\dag}.
\end{equation}
\end{thm}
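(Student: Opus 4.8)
The plan is to verify both formulas by reducing them to the already-established product representation $A^{{\scriptsize{\textcircled{\tiny \dag}}}, W}=A[(WA)^{\scriptsize{\textcircled{\tiny \dag}}}]^2$ from Theorems \ref{2.1} and \ref{2.2}, together with the single-variable identity of Lemma \ref{x} applied to the square matrix $WA$ (and, where convenient, to $AW$). The guiding observation is that $A[(WA)^{\scriptsize{\textcircled{\tiny \dag}}}]^2 = A(WA)^D (WA)^l [(WA)^l]^{\dag}$ for any $l\geq k$, by Lemma \ref{x}, and that $A(WA)^D = (AW)^D A$ and $(WA)^j W = W (AW)^j$ are the standard intertwining identities that let one pass freely between $AW$ and $WA$.

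For the second formula \eqref{} $A(WA)^l[(WA)^{l+2}]^{\dag}$, I would start from $A[(WA)^{\scriptsize{\textcircled{\tiny \dag}}}]^2$ and insert $(WA)^{l+2}[(WA)^{l+2}]^{\dag}$ on the right; since $\mathcal{R}((WA)^{\scriptsize{\textcircled{\tiny \dag}}})=\mathcal{R}((WA)^k)\supseteq \mathcal{R}((WA)^{l+2})$ one has $[(WA)^{\scriptsize{\textcircled{\tiny \dag}}}]^2 (WA)^{l+2} = (WA)^{l}$ (using $(WA)^{\scriptsize{\textcircled{\tiny \dag}}}(WA)^{k+1}=(WA)^k$ twice, or the Drazin formula), so $A[(WA)^{\scriptsize{\textcircled{\tiny \dag}}}]^2 = A[(WA)^{\scriptsize{\textcircled{\tiny \dag}}}]^2(WA)^{l+2}[(WA)^{l+2}]^{\dag} = A(WA)^l[(WA)^{l+2}]^{\dag}$. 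That gives \eqref{}. For the first formula \eqref{}, I would show the right-hand side $(AW)^l[W(AW)^{l+1}]^{\dag}$ equals $A(WA)^l[(WA)^{l+2}]^{\dag}$: write $W(AW)^{l+1} = (WA)^{l+1}W$, and then use the rule $[(WA)^{l+1}W]^{\dag}$ — here one must be a little careful, since $(B C)^{\dag}\neq C^{\dag}B^{\dag}$ in general, so instead I would multiply \eqref{} through by $W(AW)^{l+1}=(WA)^{l+2}W\cdot(\text{something})$ and check the defining equations of Theorem \ref{2.2} directly: namely that $X=(AW)^l[W(AW)^{l+1}]^{\dag}$ satisfies $XW(AW)^{k+1}=(AW)^k$, $AWXWX=X$, and $(WAWX)^*=WAWX$. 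The Hermitian condition is the cleanest: $WAWX = WAW(AW)^l[W(AW)^{l+1}]^{\dag} = W(AW)^{l+1}[W(AW)^{l+1}]^{\dag}$, which is an orthogonal projector, hence self-adjoint.

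The remaining two equations for $X=(AW)^l[W(AW)^{l+1}]^{\dag}$ require the rank/range bookkeeping $\mathrm{rank}(W(AW)^{l+1}) = \mathrm{rank}((AW)^{l+1}) = \mathrm{rank}((AW)^l)$ (the last equality because $l\geq k=\mathrm{ind}(AW)$), together with $\mathcal{R}((AW)^l[W(AW)^{l+1}]^{\dag}) \subseteq \mathcal{R}((AW)^l)$; this range inclusion plus the projector identity above is exactly enough to run the same uniqueness argument as in Theorem \ref{2.1}, or more economically to match $X$ against $A[(WA)^{\scriptsize{\textcircled{\tiny \dag}}}]^2$ on the subspace $\mathcal{R}((AW)^k)$ on which both act invertibly after multiplication by $WAW$. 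I would also invoke Lemma \ref{x} in the form $(AW)^l[(AW)^l]^{\dag} = (AW)(AW)^{\scriptsize{\textcircled{\tiny \dag}}}$ to rewrite factors as needed.

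The main obstacle I anticipate is the manipulation of the Moore-Penrose inverse of the product $W(AW)^{l+1}$: because pseudoinverses do not distribute over products, one cannot simply write $[W(AW)^{l+1}]^{\dag} = [(AW)^{l+1}]^{\dag}W^{\dag}$, so the cleanest route is to avoid splitting it altogether and instead verify that the claimed $X$ satisfies the three characterizing equations of Theorem \ref{2.2} — where every occurrence of $[W(AW)^{l+1}]^{\dag}$ appears pre-multiplied by $W(AW)^{l+1}$ and thus collapses to the orthogonal projector $W(AW)^{l+1}[W(AW)^{l+1}]^{\dag}$ — and separately check the range condition $\mathcal{R}(X)\subseteq\mathcal{R}((AW)^k)$ using $(AW)^l = (AW)^k (AW)^{l-k}$. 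Once both \eqref{} and \eqref{} are seen to satisfy the Theorem \ref{2.2} system (equivalently, both equal $A[(WA)^{\scriptsize{\textcircled{\tiny \dag}}}]^2$), uniqueness finishes the proof.
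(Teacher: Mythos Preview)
Your proposal is correct and follows essentially the same route as the paper: derive (2.9) algebraically from $A^{{\scriptsize{\textcircled{\tiny \dag}}}, W}=A[(WA)^{\scriptsize{\textcircled{\tiny \dag}}}]^2$ via Lemma~\ref{x} (the paper writes $(WA)^{\scriptsize{\textcircled{\tiny \dag}}}=(WA)^D(WA)^{l+2}[(WA)^{l+2}]^{\dag}$ and squares, whereas you insert the projector $(WA)^{l+2}[(WA)^{l+2}]^{\dag}$ on the right---these are the same computation), and verify (2.8) by checking the three equations of Theorem~\ref{2.2}, which is exactly what the paper says to do but omits. One small remark: your stated justification for inserting the projector, namely $\mathcal{R}((WA)^{\scriptsize{\textcircled{\tiny \dag}}})\supseteq\mathcal{R}((WA)^{l+2})$, is not quite the right direction; the clean reason is that $WA(WA)^{\scriptsize{\textcircled{\tiny \dag}}}=(WA)^{l+2}[(WA)^{l+2}]^{\dag}$ by Lemma~\ref{x}, so $(WA)^{\scriptsize{\textcircled{\tiny \dag}}}=(WA)^{\scriptsize{\textcircled{\tiny \dag}}}WA(WA)^{\scriptsize{\textcircled{\tiny \dag}}}=(WA)^{\scriptsize{\textcircled{\tiny \dag}}}(WA)^{l+2}[(WA)^{l+2}]^{\dag}$.
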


\begin{proof} From Theorems \ref{2.1} and \ref{2.2}, it follows that $A^{{\scriptsize{\textcircled{\tiny \dag}}}, W}=A[(WA)^{\scriptsize{\textcircled{\tiny \dag}}}]^2$. As $$(WA)^{\scriptsize{\textcircled{\tiny \dag}}}=(WA)^D(WA)^{l}[(WA)^{l}]^{\dag}=(WA)^D(WA)^{l+2}[(WA)^{l+2}]^{\dag}$$  
by Lemma \ref{x},  we derive that 
\begin{equation*}
\begin{aligned}
A^{{\scriptsize{\textcircled{\tiny \dag}}}, W}&=A[(WA)^{\scriptsize{\textcircled{\tiny \dag}}}]^2\\
&=A[(WA)^D(WA)^{l+2}[(WA)^{l+2}]^{\dag}]^2\\
&=A[(WA)^D]^2(WA)^{l+2}[(WA)^{l+2}]^{\dag}=A(WA)^{l}[(WA)^{l+2}]^{\dag}.
\end{aligned}
\end{equation*}
One can verify (2.9) by checking three equations  in Theorem \ref{2.2}. Here we omit the details.
\end{proof}
An expression of the core-EP inverse can be derived as a particular case $W=I$ of Theorem~\ref{2.5}.
\begin{cor}~Let $A\in \mathbb{C}^{n\times n}$ and  let $l$ be a positive integer such that $l\geq k={\rm ind}(A)$.  Then  $A^{{\scriptsize{\textcircled{\tiny \dag}}}}=A^{l}(A^{l+1})^{\dag}.$
\end{cor}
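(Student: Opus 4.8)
The plan is to obtain the formula $A^{\scriptsize{\textcircled{\tiny \dag}}}=A^{l}(A^{l+1})^{\dag}$ directly as the specialization $W=I$ of Theorem~\ref{2.5}. When $W=I_n$, both $AW$ and $WA$ reduce to $A$ itself, so $k=\mathrm{max}\{\mathrm{ind}(AW),\mathrm{ind}(WA)\}=\mathrm{ind}(A)$, and the hypothesis $l\ge k$ in the corollary matches the hypothesis $l\ge k$ in the theorem. The $W$-weighted core-EP inverse $A^{\scriptsize{\textcircled{\tiny \dag}},W}$ then collapses to the ordinary core-EP inverse $A^{\scriptsize{\textcircled{\tiny \dag}}}$; this is consistent with the defining system $(1.2)$, which for $W=I$ becomes $A^2X=A^k(A^k)^{\dag}$ together with $\mathcal{R}(X)\subseteq\mathcal{R}(A^k)$, i.e. the characterization of $A^{\scriptsize{\textcircled{\tiny \dag}}}$. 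One could either invoke this directly or simply note that Ferreyra et al.\ introduced the weighted notion precisely as an extension of the core-EP inverse agreeing with it when $W=I$.

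With this identification in place, I would apply formula $(2.8)$ of Theorem~\ref{2.5}, namely $A^{\scriptsize{\textcircled{\tiny \dag}},W}=(AW)^l[W(AW)^{l+1}]^{\dag}$. Substituting $W=I$ gives $(AW)^l=A^l$ and $W(AW)^{l+1}=A^{l+1}$, hence $A^{\scriptsize{\textcircled{\tiny \dag}}}=A^l(A^{l+1})^{\dag}$, which is exactly the claimed expression. Alternatively, formula $(2.9)$, $A^{\scriptsize{\textcircled{\tiny \dag}},W}=A(WA)^l[(WA)^{l+2}]^{\dag}$, specializes to $A\cdot A^l(A^{l+2})^{\dag}=A^{l+1}(A^{l+2})^{\dag}$, which on reindexing $l\mapsto l-1$ (legitimate since $l\ge 1$ in the corollary) again yields $A^l(A^{l+1})^{\dag}$; so the two formulas of the theorem are consistent and either route works.

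The only genuine point to check is the minor hypothesis mismatch: the corollary asks for $l$ a \emph{positive} integer with $l\ge k=\mathrm{ind}(A)$, whereas Theorem~\ref{2.5} allows $l$ to be any non-negative integer with $l\ge k$. If $k\ge 1$ the two conditions coincide; if $k=0$ (i.e.\ $A$ invertible) then taking $l=0$ in the theorem would give $A^{\scriptsize{\textcircled{\tiny \dag}}}=(A^0)(A^1)^{\dag}=A^{-1}$ anyway, so insisting $l\ge 1$ is harmless and the formula $A^l(A^{l+1})^{\dag}=A^l(A^{-1}A^{-l})=A^{-1}$ still holds. Thus the restriction to positive $l$ in the corollary loses nothing. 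I do not anticipate any real obstacle here; the proof is a one-line substitution $W=I$ into $(2.8)$, and the write-up amounts to recording that substitution together with the observation $\mathrm{ind}(A\cdot I)=\mathrm{ind}(I\cdot A)=\mathrm{ind}(A)$. If desired, one can also add a sentence verifying the result against Lemma~\ref{x}: there $A^{\scriptsize{\textcircled{\tiny \dag}}}=A^D A^l(A^l)^{\dag}$, and since $A^D A^l = A^{l-1}$ when $l\ge k$ (because $A^D A^{l+1}=A^l$ for $l\ge k$), this rewrites as $A^{l-1}\cdot A(A^l)^{\dag}$; combining with the present formula is a consistency check rather than part of the proof.
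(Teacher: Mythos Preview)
Your approach is correct and is exactly what the paper does: the corollary is stated immediately after Theorem~\ref{2.5} with the remark that it is the particular case $W=I$, and no separate proof is given. One small slip in your aside: when $W=I$, the first equation of $(1.2)$ becomes $AX=A^k(A^k)^{\dag}$, not $A^2X=\cdots$ (since $WAWX=I\cdot A\cdot I\cdot X=AX$); the cleaner way to see $A^{\scriptsize{\textcircled{\tiny \dag}},I}=A^{\scriptsize{\textcircled{\tiny \dag}}}$ is via the three-equation characterization in Theorem~\ref{2.2}, which for $W=I$ reduces verbatim to $(1.1)$.
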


\section{Properties of the weighted core-EP inverse }
In this section, we  study the properties of the weighted core-EP inverse.

\begin{prop}~Let $A\in \mathbb{C}^{m\times n},~W\in \mathbb{C}^{n\times m}$ with $ k=\mathrm{max}\{{\rm ind}(AW),{\rm ind}(WA)\}$. Then we have the following facts: \\
$(1)~\mathcal{R}(A^{{\scriptsize{\textcircled{\tiny \dag}}}, W})=\mathcal{R}((AW)^k);$
\\
$(2)~\mathcal{N}(A^{{\scriptsize{\textcircled{\tiny \dag}}}, W})=\mathcal{N}([(WA)^k]^*).$
\end{prop}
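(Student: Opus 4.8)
The plan is to use the closed-form expression $A^{\scriptsize{\textcircled{\tiny \dag}}, W}=A[(WA)^{\scriptsize{\textcircled{\tiny \dag}}}]^2$ established in Theorems~\ref{2.1} and~\ref{2.2}, together with the representations derived in Theorem~\ref{2.5}, and then trace range and null space through these products. For part~(1), I would start from $A^{\scriptsize{\textcircled{\tiny \dag}}, W}=(AW)^l[W(AW)^{l+1}]^{\dag}$, which immediately gives $\mathcal{R}(A^{\scriptsize{\textcircled{\tiny \dag}}, W})\subseteq\mathcal{R}((AW)^l)=\mathcal{R}((AW)^k)$ (the last equality because $l\geq k\geq\mathrm{ind}(AW)$). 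For the reverse inclusion, I would use the defining equation $XW(AW)^{k+1}=(AW)^k$ from Theorem~\ref{2.2}(2) with $X=A^{\scriptsize{\textcircled{\tiny \dag}}, W}$: since $\mathcal{R}((AW)^k)=\mathcal{R}((AW)^{k+1})$ (again by the index condition), writing $(AW)^k=(AW)^{k+1}Z$ for a suitable $Z$ and feeding this into $A^{\scriptsize{\textcircled{\tiny \dag}}, W}W(AW)^{k+1}=(AW)^k$ shows every column of $(AW)^k$ lies in $\mathcal{R}(A^{\scriptsize{\textcircled{\tiny \dag}}, W})$, hence $\mathcal{R}((AW)^k)\subseteq\mathcal{R}(A^{\scriptsize{\textcircled{\tiny \dag}}, W})$. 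Combining the two inclusions yields~(1).

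For part~(2), the cleanest route is via $A^{\scriptsize{\textcircled{\tiny \dag}}, W}=(AW)^l[W(AW)^{l+1}]^{\dag}$ once more, but now analyzing the null space of the right-hand factor. Since the Moore--Penrose inverse satisfies $\mathcal{N}(M^{\dag})=\mathcal{N}(M^*)=\mathcal{R}(M)^{\perp}$, I would first compute $\mathcal{N}([W(AW)^{l+1}]^{\dag})=\mathcal{N}([W(AW)^{l+1}]^*)$. Then one must check that pre-multiplying by $(AW)^l$ does not enlarge the null space, i.e. that $\mathcal{N}(A^{\scriptsize{\textcircled{\tiny \dag}}, W})=\mathcal{N}([W(AW)^{l+1}]^{\dag})$; this follows because $A^{\scriptsize{\textcircled{\tiny \dag}}, W}$ is an outer inverse (a \{2\}-inverse, by Theorem~\ref{2.8}), so $\mathrm{rank}(A^{\scriptsize{\textcircled{\tiny \dag}}, W})=\mathrm{rank}([W(AW)^{l+1}]^{\dag})$ forces the two null spaces to coincide. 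Finally I would identify $\mathcal{N}([W(AW)^{l+1}]^*)$ with $\mathcal{N}([(WA)^k]^*)$: note $W(AW)^{l+1}=(WA)^{l+1}W$ has the same row space as $(WA)^{l+1}$ (right-multiplication by $W$ cannot enlarge the column space, and the reverse inclusion of row spaces follows from the \{2\}-inverse rank count or from $\mathrm{ind}(WA)\leq k\leq l$), and $\mathcal{R}_s((WA)^{l+1})=\mathcal{R}_s((WA)^k)$ by the index condition, so the orthogonal complements of these row spaces — which are exactly the null spaces of the conjugate transposes — agree.

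An alternative, perhaps smoother, derivation of~(2) uses $A^{\scriptsize{\textcircled{\tiny \dag}}, W}=A(WA)^l[(WA)^{l+2}]^{\dag}$ from~(2.9) combined with Theorem~\ref{2.8}, which already tells us $A^{\scriptsize{\textcircled{\tiny \dag}}, W}=(WAW)^{(2)}_{\mathcal{R}(G),\mathcal{N}(G)}$ with $G=A(WA)^k[(WA)^k]^{\dag}$, hence $\mathcal{N}(A^{\scriptsize{\textcircled{\tiny \dag}}, W})=\mathcal{N}(G)=\mathcal{N}(A(WA)^k[(WA)^k]^{\dag})$. Then it remains to show $\mathcal{N}(A(WA)^k[(WA)^k]^{\dag})=\mathcal{N}([(WA)^k]^*)$. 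Here I would argue $[(WA)^k]^{\dag}$ has the same null space as $[(WA)^k]^*$ (standard Moore--Penrose fact), so $\mathcal{N}([(WA)^k]^*)\subseteq\mathcal{N}(A(WA)^k[(WA)^k]^{\dag})$ is immediate; for the reverse inclusion, if $A(WA)^k[(WA)^k]^{\dag}x=0$, I would use the identity $(WA)^k[(WA)^k]^{\dag}(WA)^k=(WA)^k$ and the fact that the weighted core-EP inverse recovers $(WA)^k$ (via $XW(AW)^{k+1}=(AW)^k$, equivalently $(WX)(WA)^{k+1}=(WA)^k$) to propagate the vanishing back to $[(WA)^k]^*x$. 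I expect the main obstacle to be exactly this last step — keeping careful track of which idempotent $(WA)^k[(WA)^k]^{\dag}$ or its transpose acts on which side, and correctly exploiting the index equalities $\mathcal{R}((AW)^k)=\mathcal{R}((AW)^{k+1})$ and $\mathcal{R}_s((WA)^k)=\mathcal{R}_s((WA)^{k+1})$ to pass between powers $k$, $l$, $l+1$, and $l+2$ without loss.
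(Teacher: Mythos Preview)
Your approach to part~(1) is correct and essentially parallel to the paper's, though you reach the forward inclusion via the representation $(AW)^l[W(AW)^{l+1}]^{\dag}$ from Theorem~\ref{2.5} whereas the paper works directly with $A[(WA)^{\scriptsize{\textcircled{\tiny \dag}}}]^2=(AW)^kA[(WA)^{\scriptsize{\textcircled{\tiny \dag}}}]^{k+2}$. For the reverse inclusion, the equation $A^{\scriptsize{\textcircled{\tiny \dag}},W}W(AW)^{k+1}=(AW)^k$ already exhibits every column of $(AW)^k$ as a column of $A^{\scriptsize{\textcircled{\tiny \dag}},W}$ times something, so the detour through ``$(AW)^k=(AW)^{k+1}Z$'' is unnecessary.

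For part~(2), your first route contains a genuine slip: you write that $\mathcal{N}(M^*)$ is the orthogonal complement of the \emph{row} space of $M$, but in fact $\mathcal{N}(M^*)=\mathcal{R}(M)^{\perp}$, the orthogonal complement of the \emph{column} space. Consequently the relevant identity is $\mathcal{R}((WA)^{l+1}W)=\mathcal{R}((WA)^k)$ (column spaces), not a statement about row spaces. This is easy to establish---$\mathcal{R}((WA)^{l+1}W)\subseteq\mathcal{R}((WA)^{l+1})=\mathcal{R}((WA)^k)$, and conversely $(WA)^{l+2}=(WA)^{l+1}W\cdot A$ gives $\mathcal{R}((WA)^k)=\mathcal{R}((WA)^{l+2})\subseteq\mathcal{R}((WA)^{l+1}W)$---but as written your argument tracks the wrong subspace. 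The rank-matching step (to conclude $\mathcal{N}(A^{\scriptsize{\textcircled{\tiny \dag}},W})=\mathcal{N}([W(AW)^{l+1}]^{\dag})$) is correct once you invoke part~(1) to get $\mathrm{rank}(A^{\scriptsize{\textcircled{\tiny \dag}},W})=\mathrm{rank}((AW)^k)$ and then squeeze $\mathrm{rank}(W(AW)^{l+1})$ between $\mathrm{rank}((AW)^{l+2})$ and $\mathrm{rank}((WA)^{l+1})$.

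By contrast, the paper avoids both Theorem~\ref{2.5} and rank counting entirely. It argues part~(2) by pure core-EP algebra: if $A[(WA)^{\scriptsize{\textcircled{\tiny \dag}}}]^2Y=0$, left-multiply by $[(WA)^k]^*(WA)W$ and use $(WA)^2[(WA)^{\scriptsize{\textcircled{\tiny \dag}}}]^2=WA(WA)^{\scriptsize{\textcircled{\tiny \dag}}}=(WA)^k[(WA)^k]^{\dag}$ together with $[(WA)^k]^*(WA)^k[(WA)^k]^{\dag}=[(WA)^k]^*$ to get $[(WA)^k]^*Y=0$; conversely, if $[(WA)^k]^*Z=0$, the identity $A[(WA)^{\scriptsize{\textcircled{\tiny \dag}}}]^2\bigl[[(WA)^{\scriptsize{\textcircled{\tiny \dag}}}]^{k}\bigr]^*[(WA)^k]^*=A[(WA)^{\scriptsize{\textcircled{\tiny \dag}}}]^2$ gives $A^{\scriptsize{\textcircled{\tiny \dag}},W}Z=0$. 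This is shorter and does not rely on results proved later in the paper (your routes lean on Theorems~\ref{2.5} and~\ref{2.8}, which themselves come after this proposition in the logical flow, though there is no circularity). Your strategy is valid once the row/column confusion is corrected, but it is considerably more indirect.
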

\begin{proof}~(1)~In view of Theorems \ref{2.1} and \ref{2.2},  $A^{{\scriptsize{\textcircled{\tiny \dag}}}, W}=A[(WA)^{\scriptsize{\textcircled{\tiny \dag}}}]^2=A(WA)^k[(WA)^{\scriptsize{\textcircled{\tiny \dag}}}]^{k+2}=(AW)^kA[(WA)^{\scriptsize{\textcircled{\tiny \dag}}}]^{k+2}$, i.e., $\mathcal{R}(A^{{\scriptsize{\textcircled{\tiny \dag}}}, W})\subseteq \mathcal{R}((AW)^k)$, together with $$(AW)^k=A[(WA)^{\scriptsize{\textcircled{\tiny \dag}}}]^2(WA)^{k+2}W(AW)^D=A^{{\scriptsize{\textcircled{\tiny \dag}}}, W}(WA)^{k+1}W,$$ i.e., $\mathcal{R}((AW)^k)\subseteq  \mathcal{R}(A^{{\scriptsize{\textcircled{\tiny \dag}}}, W})$. Thus, $\mathcal{R}(A^{{\scriptsize{\textcircled{\tiny \dag}}}, W})=\mathcal{R}((AW)^k)$.
\\
(2) Suppose $Y\in \mathcal{N}(A^{{\scriptsize{\textcircled{\tiny \dag}}}, W})$, i.e., $A[(WA)^{\scriptsize{\textcircled{\tiny \dag}}}]^{2}Y=0$, then $[(WA)^k]^*(WA)^2[(WA)^{\scriptsize{\textcircled{\tiny \dag}}}]^{2}Y=0$. Thus, $[(WA)^k]^*Y=0$, i.e., $\mathcal{N}(A^{{\scriptsize{\textcircled{\tiny \dag}}}, W})\subseteq \mathcal{N}([(WA)^k]^*)$. Conversely, suppose $Z \in \mathcal{N}([(WA)^k]^*)$, i.e., $[(WA)^k]^*Z=0$, then $A[(WA)^{\scriptsize{\textcircled{\tiny \dag}}}]^2[(WA)^{\scriptsize{\textcircled{\tiny \dag}}}]^{k*}[(WA)^k]^*Z=0$. Therefore, $A^{{\scriptsize{\textcircled{\tiny \dag}}}, W}Z=A[(WA)^{\scriptsize{\textcircled{\tiny \dag}}}]^2Z=0$, i.e., $\mathcal{N}([(WA)^k]^*)\subseteq \mathcal{N}(A^{{\scriptsize{\textcircled{\tiny \dag}}}, W})$. Hence $\mathcal{N}([(WA)^k]^*) = \mathcal{N}(A^{{\scriptsize{\textcircled{\tiny \dag}}}, W})$.
\end{proof}

\begin{prop}~Let $A\in \mathbb{C}^{m\times n},~W\in \mathbb{C}^{n\times m}$ with ${\rm ind}(WA)=k$. Then we have the following facts: \\
$(1)~\mathcal{R}(A^{{\scriptsize{\textcircled{\tiny \dag}}}, W}W)\oplus \mathcal{N}(A^{{\scriptsize{\textcircled{\tiny \dag}}}, W}W)=\mathbb{C}^{m};$\\
$(2)~\mathcal{R}(WA^{{\scriptsize{\textcircled{\tiny \dag}}}, W})\oplus \mathcal{N}(WA^{{\scriptsize{\textcircled{\tiny \dag}}}, W})=\mathbb{C}^{n}.$
\end{prop}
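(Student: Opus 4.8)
The plan is to reduce each part to the statement that a suitable square matrix has index at most $1$. Recall the standard fact that for $N\in\mathbb{C}^{p\times p}$ one has $\mathcal{R}(N)\oplus\mathcal{N}(N)=\mathbb{C}^{p}$ if and only if ${\rm rank}(N)={\rm rank}(N^{2})$: the inclusion $\mathcal{R}(N^{2})\subseteq\mathcal{R}(N)$ holds automatically, equality of ranks upgrades it to $\mathcal{R}(N^{2})=\mathcal{R}(N)$, so $N$ maps $\mathcal{R}(N)$ onto itself and is thus injective on $\mathcal{R}(N)$, giving $\mathcal{R}(N)\cap\mathcal{N}(N)=\{0\}$; rank--nullity then yields the direct sum. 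Throughout I would use that $A^{{\scriptsize{\textcircled{\tiny \dag}}}, W}=A[(WA)^{\scriptsize{\textcircled{\tiny \dag}}}]^{2}$, which is the content of Theorems \ref{2.1} and \ref{2.2}.

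For part $(2)$ I would first record the identity
\[
WA^{{\scriptsize{\textcircled{\tiny \dag}}}, W}=WA[(WA)^{\scriptsize{\textcircled{\tiny \dag}}}]^{2}=(WA)[(WA)^{\scriptsize{\textcircled{\tiny \dag}}}]^{2}=(WA)^{\scriptsize{\textcircled{\tiny \dag}}},
\]
the last step being the middle defining equation $AX^{2}=X$ of $(1.1)$ applied to the square matrix $WA\in\mathbb{C}^{n\times n}$. Part $(2)$ is then immediate from property $(2)$ of the core-EP inverse recorded in the Introduction, applied to $WA$: $\mathcal{R}((WA)^{\scriptsize{\textcircled{\tiny \dag}}})\oplus\mathcal{N}((WA)^{\scriptsize{\textcircled{\tiny \dag}}})=\mathbb{C}^{n}$.

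For part $(1)$, write $N=A^{{\scriptsize{\textcircled{\tiny \dag}}}, W}W\in\mathbb{C}^{m\times m}$; this $N$ is not literally a core-EP inverse, so I would argue through ranks. The first equation of $(2.1)$ in Theorem \ref{2.1} says $A^{{\scriptsize{\textcircled{\tiny \dag}}}, W}W(AW)^{k+1}=(AW)^{k}$, i.e. $N(AW)^{k+1}=(AW)^{k}$; multiplying once more by $N$ and simplifying the power of $AW$ gives $N^{2}(AW)^{k+2}=(AW)^{k}$. Hence ${\rm rank}((AW)^{k})\le{\rm rank}(N^{2})\le{\rm rank}(N)$. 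Conversely $\mathcal{R}(N)\subseteq\mathcal{R}(A^{{\scriptsize{\textcircled{\tiny \dag}}}, W})=\mathcal{R}((AW)^{k})$ by the first part of the preceding proposition, so ${\rm rank}(N)\le{\rm rank}((AW)^{k})$. Thus ${\rm rank}(N^{2})={\rm rank}(N)$, and the general fact from the first paragraph yields $\mathcal{R}(N)\oplus\mathcal{N}(N)=\mathbb{C}^{m}$.

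The computations are short; the only point needing care is the bookkeeping with the powers $(AW)^{k+j}$ (deriving $N^{2}(AW)^{k+2}=(AW)^{k}$ from $N(AW)^{k+1}=(AW)^{k}$) together with the clean reduction $WA^{{\scriptsize{\textcircled{\tiny \dag}}}, W}=(WA)^{\scriptsize{\textcircled{\tiny \dag}}}$, after which both parts follow from results already at hand. I do not anticipate a genuine obstacle here.
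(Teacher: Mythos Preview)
Your proof is correct. Part~(2) matches the paper: both reduce to the identity $WA^{{\scriptsize{\textcircled{\tiny \dag}}}, W}=(WA)^{\scriptsize{\textcircled{\tiny \dag}}}$ and then appeal to a known property of the core-EP inverse (you cite property~(2) from the Introduction; the paper verifies it directly via the orthogonal projector $WA(WA)^{\scriptsize{\textcircled{\tiny \dag}}}$).

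For part~(1) the approaches differ. The paper argues constructively: it writes each $X\in\mathbb{C}^{m}$ as $A(WA)^{\scriptsize{\textcircled{\tiny \dag}}}WX+[I-A(WA)^{\scriptsize{\textcircled{\tiny \dag}}}W]X$, checks that the first summand lies in $\mathcal{R}(A^{{\scriptsize{\textcircled{\tiny \dag}}}, W}W)$ and the second in $\mathcal{N}(A^{{\scriptsize{\textcircled{\tiny \dag}}}, W}W)$, and then shows the intersection is trivial. You instead prove $\mathrm{rank}(N)=\mathrm{rank}(N^{2})$ for $N=A^{{\scriptsize{\textcircled{\tiny \dag}}}, W}W$ by squeezing both ranks against $\mathrm{rank}((AW)^{k})$, using equation~(2.1) on one side and Proposition~3.1(1) on the other. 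Your route is a bit cleaner conceptually (it amounts to showing $\mathrm{ind}(N)\le 1$) and avoids the element-level manipulations; the paper's route has the virtue of exhibiting the decomposition explicitly through the idempotent $A(WA)^{\scriptsize{\textcircled{\tiny \dag}}}W$. One minor caveat: Proposition~3.1 and equation~(2.1) are stated under $k=\max\{\mathrm{ind}(AW),\mathrm{ind}(WA)\}$ while the present statement assumes only $\mathrm{ind}(WA)=k$; this inconsistency is already present in the paper, not introduced by you.
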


\begin{proof}~(1)~Observe that $A^{{\scriptsize{\textcircled{\tiny \dag}}}, W}W=A[(WA)^{\scriptsize{\textcircled{\tiny \dag}}}]^2W$. For any $X\in \mathbb{C}^{m}$,  $X=A(WA)^{\scriptsize{\textcircled{\tiny \dag}}}WX+[I-A(WA)^{\scriptsize{\textcircled{\tiny \dag}}}W]X$, where   
\begin{equation*}
\begin{aligned}
A(WA)^{\scriptsize{\textcircled{\tiny \dag}}}WX&=A(WA)^{\scriptsize{\textcircled{\tiny \dag}}}WA(WA)^{\scriptsize{\textcircled{\tiny \dag}}}WX
=A(WA)^{\scriptsize{\textcircled{\tiny \dag}}}(WA)^k[(WA)^{\scriptsize{\textcircled{\tiny \dag}}}]^kWX~~~~~~~~\\
&=A[(WA)^{\scriptsize{\textcircled{\tiny \dag}}}]^2(WA)^{k+1}[(WA)^{\scriptsize{\textcircled{\tiny \dag}}}]^kWX=A^{{\scriptsize{\textcircled{\tiny \dag}}}, W}(WA)^{k+1}[(WA)^{\scriptsize{\textcircled{\tiny \dag}}}]^kWX\\
&\in \mathcal{R}(A^{{\scriptsize{\textcircled{\tiny \dag}}}, W}W),
\end{aligned}
\end{equation*}
\begin{equation*}
\begin{aligned}
A^{{\scriptsize{\textcircled{\tiny \dag}}}, W}W[I-A(WA)^{\scriptsize{\textcircled{\tiny \dag}}}W]X
&=A[(WA)^{\scriptsize{\textcircled{\tiny \dag}}}]^2W[I-A(WA)^{\scriptsize{\textcircled{\tiny \dag}}}W]X\\
&=A[(WA)^{\scriptsize{\textcircled{\tiny \dag}}}]^2WX-A[(WA)^{\scriptsize{\textcircled{\tiny \dag}}}]^2WA(WA)^{\scriptsize{\textcircled{\tiny \dag}}}WX\\
&=A[(WA)^{\scriptsize{\textcircled{\tiny \dag}}}]^2WX-A[(WA)^{\scriptsize{\textcircled{\tiny \dag}}}]^2WX=0,~\text{which}~~~~~~~~~~
\end{aligned}
\end{equation*}
implies that
$[I-A(WA)^{\scriptsize{\textcircled{\tiny \dag}}}W]X  \in \mathcal{N}(A^{{\scriptsize{\textcircled{\tiny \dag}}}, W}W).$\\
Therefore, $\mathcal{R}(A^{{\scriptsize{\textcircled{\tiny \dag}}}, W}W)+ \mathcal{N}(A^{{\scriptsize{\textcircled{\tiny \dag}}}, W}W)=\mathbb{C}^{m}$. Further, 
suppose $$Y\in \mathcal{R}(A[(WA)^{\scriptsize{\textcircled{\tiny \dag}}}]^2W) \cap \mathcal{N}(A[(WA)^{\scriptsize{\textcircled{\tiny \dag}}}]^2W),$$ that is to say, $Y=A[(WA)^{\scriptsize{\textcircled{\tiny \dag}}}]^2WZ$ for some $Z\in \mathbb{C}^{m}$ and $A[(WA)^{\scriptsize{\textcircled{\tiny \dag}}}]^2WY=0$. Thus, $A[(WA)^{\scriptsize{\textcircled{\tiny \dag}}}]^2WA[(WA)^{\scriptsize{\textcircled{\tiny \dag}}}]^2WZ=0$, i.e., $A[(WA)^{\scriptsize{\textcircled{\tiny \dag}}}]^3WZ=0$. Pre-multiply this equality by $WAW$, then 
$(WA)^{\scriptsize{\textcircled{\tiny \dag}}}WZ=0$, which deduces that $Y=0$. Hence $\mathcal{R}(A^{{\scriptsize{\textcircled{\tiny \dag}}}, W}W)\oplus \mathcal{N}(A^{{\scriptsize{\textcircled{\tiny \dag}}}, W}W)=\mathbb{C}^{m}$.
\\
(2)~Note that $WA^{{\scriptsize{\textcircled{\tiny \dag}}}, W}=(WA)^{\scriptsize{\textcircled{\tiny \dag}}}$.  From  $\mathcal{R}((WA)^{\scriptsize{\textcircled{\tiny \dag}}})=\mathcal{R}(WA(WA)^{\scriptsize{\textcircled{\tiny \dag}}})$ and $\mathcal{N}((WA)^{\scriptsize{\textcircled{\tiny \dag}}})=\mathcal{N}(WA(WA)^{\scriptsize{\textcircled{\tiny \dag}}})$
as well as $[WA(WA)^{\scriptsize{\textcircled{\tiny \dag}}}]^2=WA(WA)^{\scriptsize{\textcircled{\tiny \dag}}}=[WA(WA)^{\scriptsize{\textcircled{\tiny \dag}}}]^*$, it follows clearly that $\mathcal{R}(WA^{{\scriptsize{\textcircled{\tiny \dag}}}, W})\oplus \mathcal{N}(WA^{{\scriptsize{\textcircled{\tiny \dag}}}, W})=\mathbb{C}^{n}$.
\end{proof}

\begin{prop}~Let $A\in \mathbb{C}^{m\times n},~W\in \mathbb{C}^{n\times m}$ with ${\rm ind}(WA)=k$. Then we have the following facts: \\
$(1)~WAWA^{{\scriptsize{\textcircled{\tiny \dag}}}, W}$ is an orthogonal projector onto $\mathcal{R}((WA)^k);$\\
$(2)~WA^{{\scriptsize{\textcircled{\tiny \dag}}}, W}WA$ is an oblique projector onto $\mathcal{R}((WA)^k)$ along 
$\mathcal{N}([(WA)^k]^{\dag}WA).$
\end{prop}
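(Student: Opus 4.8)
The plan is to reduce both statements to the known core-EP facts recorded in the introduction, applied to the square matrix $M := WA$, via the identity $WA^{{\scriptsize{\textcircled{\tiny \dag}}}, W} = (WA)^{{\scriptsize{\textcircled{\tiny \dag}}}}$ established in the previous proposition. First I would write $WAWA^{{\scriptsize{\textcircled{\tiny \dag}}}, W} = WA \cdot WA^{{\scriptsize{\textcircled{\tiny \dag}}}, W} = (WA)(WA)^{{\scriptsize{\textcircled{\tiny \dag}}}} = MM^{{\scriptsize{\textcircled{\tiny \dag}}}}$ and $WA^{{\scriptsize{\textcircled{\tiny \dag}}}, W}WA = (WA)^{{\scriptsize{\textcircled{\tiny \dag}}}}(WA) = M^{{\scriptsize{\textcircled{\tiny \dag}}}}M$, using only the associativity regrouping $WA^{{\scriptsize{\textcircled{\tiny \dag}}}, W} = (WA)^{{\scriptsize{\textcircled{\tiny \dag}}}}$ which follows from $A^{{\scriptsize{\textcircled{\tiny \dag}}}, W} = A[(WA)^{{\scriptsize{\textcircled{\tiny \dag}}}}]^2$ together with $WA[(WA)^{{\scriptsize{\textcircled{\tiny \dag}}}}]^2 = (WA)^{{\scriptsize{\textcircled{\tiny \dag}}}}$ (the latter is the outer-inverse-type relation already used repeatedly above). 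Once this reduction is in place, $M = WA$ has $\mathrm{ind}(M) = k$ by hypothesis, so property (3) of the core-EP inverse stated in the introduction applies verbatim.

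For (1): by the cited property, $MM^{{\scriptsize{\textcircled{\tiny \dag}}}}$ is the orthogonal projector onto $\mathcal{R}(M^k) = \mathcal{R}((WA)^k)$, which is exactly the claim. For completeness I would also note idempotency and Hermitian-ness directly, i.e. $(MM^{{\scriptsize{\textcircled{\tiny \dag}}}})^2 = M(M^{{\scriptsize{\textcircled{\tiny \dag}}}}MM^{{\scriptsize{\textcircled{\tiny \dag}}}}) = MM^{{\scriptsize{\textcircled{\tiny \dag}}}}$ using $M^{{\scriptsize{\textcircled{\tiny \dag}}}}MM^{{\scriptsize{\textcircled{\tiny \dag}}}} = M^{{\scriptsize{\textcircled{\tiny \dag}}}}$, and $(WAWA^{{\scriptsize{\textcircled{\tiny \dag}}}, W})^* = (WAWA^{{\scriptsize{\textcircled{\tiny \dag}}}, W})$ which is precisely the third defining equation $(WAWX)^* = WAWX$ of Theorem~\ref{2.2} with $X = A^{{\scriptsize{\textcircled{\tiny \dag}}}, W}$; this last observation lets me quote Theorem~\ref{2.2} rather than the introductory list if a self-contained argument is preferred. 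The range statement $\mathcal{R}(MM^{{\scriptsize{\textcircled{\tiny \dag}}}}) = \mathcal{R}((WA)^k)$ then comes from $MM^{{\scriptsize{\textcircled{\tiny \dag}}}}(WA)^k = WA(WA)^{{\scriptsize{\textcircled{\tiny \dag}}}}(WA)^k = (WA)^k$ (using $XA^{k+1} = A^k$ hence $XA^k = A^{k-1}$ iterated, or directly $(WA)^{{\scriptsize{\textcircled{\tiny \dag}}}}(WA)^{k+1} = (WA)^k$ as in the proof of Theorem~\ref{2.1}) for one inclusion, and $MM^{{\scriptsize{\textcircled{\tiny \dag}}}} = M^k (M^{{\scriptsize{\textcircled{\tiny \dag}}}})^{k-1}M^{{\scriptsize{\textcircled{\tiny \dag}}}}$-type factoring, or simply $MM^{{\scriptsize{\textcircled{\tiny \dag}}}} = (WA)(WA)^D(WA)^k[(WA)^k]^{\dag}$ via Lemma~\ref{x}, for the reverse.

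For (2): $M^{{\scriptsize{\textcircled{\tiny \dag}}}}M$ is idempotent since $M^{{\scriptsize{\textcircled{\tiny \dag}}}}MM^{{\scriptsize{\textcircled{\tiny \dag}}}}M = M^{{\scriptsize{\textcircled{\tiny \dag}}}}M$; it need not be Hermitian, so it is merely an oblique projector. Its range is $\mathcal{R}(M^{{\scriptsize{\textcircled{\tiny \dag}}}}M) = \mathcal{R}(M^{{\scriptsize{\textcircled{\tiny \dag}}}}) = \mathcal{R}(M^k) = \mathcal{R}((WA)^k)$, using $\mathcal{R}(M^{{\scriptsize{\textcircled{\tiny \dag}}}}) = \mathcal{R}(M^k)$ from property (1) of the core-EP inverse and $\mathcal{R}(M^{{\scriptsize{\textcircled{\tiny \dag}}}}M) = \mathcal{R}(M^{{\scriptsize{\textcircled{\tiny \dag}}}})$ since $M^{{\scriptsize{\textcircled{\tiny \dag}}}} = M^{{\scriptsize{\textcircled{\tiny \dag}}}}MM^{{\scriptsize{\textcircled{\tiny \dag}}}}$ gives $\mathcal{R}(M^{{\scriptsize{\textcircled{\tiny \dag}}}}) \subseteq \mathcal{R}(M^{{\scriptsize{\textcircled{\tiny \dag}}}}M)$ and the other inclusion is trivial. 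The null space equals $\mathcal{N}([(WA)^k]^{\dag}WA)$: I would show $\mathcal{N}(M^{{\scriptsize{\textcircled{\tiny \dag}}}}M) = \mathcal{N}([M^k]^{\dag}M)$ by a double inclusion, writing $M^{{\scriptsize{\textcircled{\tiny \dag}}}} = M^D M^k (M^k)^{\dag}$ (Lemma~\ref{x}) so that $M^{{\scriptsize{\textcircled{\tiny \dag}}}}M x = 0 \iff M^D M^k (M^k)^{\dag} M x = 0$, and since $M^D$ restricted to $\mathcal{R}(M^k)$ is injective and $M^k(M^k)^{\dag}Mx \in \mathcal{R}(M^k)$, this is equivalent to $M^k(M^k)^{\dag}Mx = 0$, which (left-multiplying by $(M^k)^{\dag}$ and using $(M^k)^{\dag}M^k(M^k)^{\dag} = (M^k)^{\dag}$, and conversely left-multiplying by $M^k$) is equivalent to $(M^k)^{\dag}Mx = 0$. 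Finally $M^{{\scriptsize{\textcircled{\tiny \dag}}}}M = (WA)^{{\scriptsize{\textcircled{\tiny \dag}}}}WA = WA^{{\scriptsize{\textcircled{\tiny \dag}}}, W}WA$, completing the proof. The one spot needing care is the null-space equality in (2) — the injectivity of $M^D$ on $\mathcal{R}(M^k)$, equivalently $\mathcal{N}(M^D) \cap \mathcal{R}(M^k) = \{0\}$, must be invoked cleanly; everything else is direct substitution using the identities already displayed in Section 2.
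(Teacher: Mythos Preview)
Your proposal is correct and follows essentially the same strategy as the paper: both reduce the claims to the core-EP properties of $M=WA$ via the identity $WA^{{\scriptsize{\textcircled{\tiny \dag}}}, W}=(WA)^{{\scriptsize{\textcircled{\tiny \dag}}}}$, then invoke the introductory facts about $MM^{{\scriptsize{\textcircled{\tiny \dag}}}}$ and $M^{{\scriptsize{\textcircled{\tiny \dag}}}}M$. The paper's version is slightly more streamlined---for (1) it simply computes $WAWA^{{\scriptsize{\textcircled{\tiny \dag}}}, W}=(WA)^k[(WA)^k]^{\dag}$ directly from Lemma~\ref{x} and recognizes this as $P_{\mathcal{R}((WA)^k)}$, and for (2) it quotes the range/null-space identities from property~(3) outright---whereas you unpack those facts in more detail (including the null-space argument via injectivity of $M^D$ on $\mathcal{R}(M^k)$), but there is no substantive difference in approach.
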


\begin{proof}~$(1)$~Since $A^{{\scriptsize{\textcircled{\tiny \dag}}}, W}=A[(WA)^{\scriptsize{\textcircled{\tiny \dag}}}]^2$ by applying Theorems \ref{2.1} and \ref{2.2}, then 
$$WAWA^{{\scriptsize{\textcircled{\tiny \dag}}}, W}=WA(WA)^{\scriptsize{\textcircled{\tiny \dag}}}=(WA)^k[(WA)^k]^{\dag}.$$
Therefore, $WAWA^{{\scriptsize{\textcircled{\tiny \dag}}}, W}$ is a orthogonal projector onto $\mathcal{R}((WA)^k)$.\\
$(2)$~Observe that $WA^{{\scriptsize{\textcircled{\tiny \dag}}}, W}WA=(WA)^{\scriptsize{\textcircled{\tiny \dag}}}WA$. Since $(WA)^{\scriptsize{\textcircled{\tiny \dag}}}$ is an outer inverse of $(WA)$, then $[(WA)^{\scriptsize{\textcircled{\tiny \dag}}}WA]^2=(WA)^{\scriptsize{\textcircled{\tiny \dag}}}WA$, together with $$\mathcal{R}((WA)^{\scriptsize{\textcircled{\tiny \dag}}}WA)=\mathcal{R}((WA)^{k})~\text{and}~\mathcal{N}((WA)^{\scriptsize{\textcircled{\tiny \dag}}}WA)=\mathcal{N}([(WA)^{k}]^{\dag}WA),$$ which implies that $WA^{{\scriptsize{\textcircled{\tiny \dag}}}, W}WA$ is a projector onto $\mathcal{R}((WA)^k)$ along $\mathcal{N}([(WA)^k]^{\dag}WA)$.
\end{proof}

\section{Relations among the weighted core-EP inverse and other generalized inverses}
In this section, we wish to reveal the relations among the weighted core-EP inverse, weighted Drazin inverse, the inverse along an element, and $(B,C)$-inverse.

The first  result states that the $W$-weighted core-EP inverse of $A$ (i.e., $A^{{\scriptsize{\textcircled{\tiny \dag}}}, W}$) and  the $W$-weighted Drazin inverse of $A$ (i.e., $A^{D,W}$) can be  mutually expressed
by  post-multiplying an oblique ( orthogonal ) projector.

\begin{thm}~Let $A\in \mathbb{C}^{m\times n},~W\in \mathbb{C}^{n\times m}$ with ${\rm ind}(WA)=k$. Then  \\
$(1)~A^{{\scriptsize{\textcircled{\tiny \dag}}}, W}
=A^{D,W}P_{(WA)^k};$\\
$(2)~A^{D,W}=A^{{\scriptsize{\textcircled{\tiny \dag}}}, W}P_{\mathcal{R}((WA)^k),\mathcal{N}((WA)^k)}.$
\end{thm}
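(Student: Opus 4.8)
Write $B=WA$, an $n\times n$ matrix with $\mathrm{ind}(B)=k$. The plan is to reduce the statement to facts about this single square matrix, via the identity $A^{{\scriptsize{\textcircled{\tiny \dag}}}, W}=A[(WA)^{\scriptsize{\textcircled{\tiny \dag}}}]^2$ established in Theorems \ref{2.1} and \ref{2.2}, together with the classical representation $A^{D,W}=A[(WA)^D]^2$ of the $W$-weighted Drazin inverse \cite{C1980}. In the statement, $P_{(WA)^k}$ is read as the orthogonal projector $(WA)^k[(WA)^k]^{\dag}$ onto $\mathcal{R}((WA)^k)$, while $P_{\mathcal{R}((WA)^k),\mathcal{N}((WA)^k)}$ is the spectral (oblique) idempotent $B^DB=BB^D$ of $B$, whose range is $\mathcal{R}(B^k)$ and whose null space is $\mathcal{N}(B^k)$.

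The \emph{technical heart} is the factorization
\[
[(WA)^{\scriptsize{\textcircled{\tiny \dag}}}]^2=[(WA)^D]^2(WA)^k[(WA)^k]^{\dag}.
\]
To obtain it I would begin from the relation $AX^2=X$ of $(1.1)$ applied to $B$, namely $B[(WA)^{\scriptsize{\textcircled{\tiny \dag}}}]^2=(WA)^{\scriptsize{\textcircled{\tiny \dag}}}$, multiply on the left by $B^D$, and invoke two observations: first, $\mathcal{R}([(WA)^{\scriptsize{\textcircled{\tiny \dag}}}]^2)\subseteq\mathcal{R}((WA)^{\scriptsize{\textcircled{\tiny \dag}}})=\mathcal{R}((WA)^k)$ by property $(1)$ of the core-EP inverse, and $B^DB$ acts as the identity on $\mathcal{R}(B^k)$, so $B^DB[(WA)^{\scriptsize{\textcircled{\tiny \dag}}}]^2=[(WA)^{\scriptsize{\textcircled{\tiny \dag}}}]^2$; second, Lemma \ref{x} gives $(WA)^{\scriptsize{\textcircled{\tiny \dag}}}=B^D(WA)^k[(WA)^k]^{\dag}$, hence $B^D(WA)^{\scriptsize{\textcircled{\tiny \dag}}}=[(WA)^D]^2(WA)^k[(WA)^k]^{\dag}$. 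Equating the two expressions for $B^DB[(WA)^{\scriptsize{\textcircled{\tiny \dag}}}]^2$ yields the displayed identity.

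Part $(1)$ then follows by left-multiplying this identity by $A$:
\[
A^{{\scriptsize{\textcircled{\tiny \dag}}}, W}=A[(WA)^{\scriptsize{\textcircled{\tiny \dag}}}]^2=A[(WA)^D]^2\,(WA)^k[(WA)^k]^{\dag}=A^{D,W}P_{(WA)^k}.
\]
For part $(2)$, post-multiply part $(1)$ by $P_{\mathcal{R}((WA)^k),\mathcal{N}((WA)^k)}=B^DB$ and use the two absorption identities $(WA)^k[(WA)^k]^{\dag}\,B^DB=B^DB$ — valid because every column of $B^DB$ lies in $\mathcal{R}(B^k)$, which the orthogonal projector $(WA)^k[(WA)^k]^{\dag}$ fixes — and $[(WA)^D]^2B^DB=[(WA)^D]^2$, which comes from $B^DBB^D=B^D$ and $B^DB=BB^D$; this collapses the right-hand side to $A[(WA)^D]^2=A^{D,W}$. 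Alternatively, one notes directly that $P_{(WA)^k}P_{\mathcal{R}((WA)^k),\mathcal{N}((WA)^k)}=P_{\mathcal{R}((WA)^k),\mathcal{N}((WA)^k)}$ and $A^{D,W}P_{\mathcal{R}((WA)^k),\mathcal{N}((WA)^k)}=A^{D,W}$, so $(2)$ is a one-line consequence of $(1)$.

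I expect the only genuine obstacle to be careful bookkeeping with the idempotents involved: one must keep track of one-sided range/row-space inclusions so that each projector can legitimately be absorbed, and must not confuse the orthogonal projector $P_{(WA)^k}$ with the spectral idempotent $B^DB$ (they agree on $\mathcal{R}(B^k)$ but differ as operators on all of $\mathbb{C}^n$). Everything else reduces to short manipulations with Lemma \ref{x}, equation $(1.1)$, the Drazin identities $B^DBB^D=B^D$ and $B^DB=BB^D$, and the product rule $A(WA)^D=(AW)^DA$ already used in the paper.
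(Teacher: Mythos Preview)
Your proof is correct and follows essentially the same approach as the paper: both rely on Lemma~\ref{x}, the identity $A^{{\scriptsize{\textcircled{\tiny \dag}}}, W}=A[(WA)^{\scriptsize{\textcircled{\tiny \dag}}}]^2$, and $A^{D,W}=A[(WA)^D]^2$ to reduce everything to the key factorization $[(WA)^{\scriptsize{\textcircled{\tiny \dag}}}]^2=[(WA)^D]^2(WA)^k[(WA)^k]^{\dag}$. The only cosmetic difference is in part~(2): the paper starts from $A[(WA)^D]^2$ and rewrites it as $A[(WA)^{\scriptsize{\textcircled{\tiny \dag}}}]^2\,WA(WA)^D$, whereas you obtain (2) by post-multiplying (1) by the spectral idempotent $WA(WA)^D$ and absorbing the projectors---the same absorption identities are used in both, just read in opposite directions.
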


\begin{proof}~$(1)$~It is known that $A^{{\scriptsize{\textcircled{\tiny \dag}}}, W}=A[(WA)^{\scriptsize{\textcircled{\tiny \dag}}}]^2$, $(WA)^{\scriptsize{\textcircled{\tiny \dag}}}=(WA)^D(WA)^k[(WA)^k]^{\dag}$ and $A^{D,W}=A[(WA)^D]^2$. Thus, $A^{{\scriptsize{\textcircled{\tiny \dag}}}, W}=A[(WA)^D]^2(WA)^k[(WA)^k]^{\dag}=A^{D,W}(WA)^k[(WA)^k]^{\dag}=A^{D,W}P_{(WA)^k}$.\\
$(2)$~Observe that $A^{D,W}=A[(WA)^D]^2=A[(WA)^D(WA)^k[(WA)^k]^{\dag}]^2(WA)^k[(WA)^D]^k=A[(WA)^{\scriptsize{\textcircled{\tiny \dag}}}]^2WA(WA)^D=A^{{\scriptsize{\textcircled{\tiny \dag}}}, W}WA(WA)^D=A^{{\scriptsize{\textcircled{\tiny \dag}}}, W}P_{\mathcal{R}((WA)^k),\mathcal{N}((WA)^k)}$.
\end{proof}

In what follows, we investigate the relations between the weighted core-EP inverse and the inverse along an element, $(B,C)$-inverse respectively. Let us recall two  known notions. 
\begin{defn}\emph{\cite{2011}}\label{2011}~Let $A\in \mathbb{C}^{n\times m}$ and $D, X\in \mathbb{C}^{m\times n}$. Then $X$ is the inverse of $A$ along $D$ if
$$XAD=D=DAX~\text{and}~\mathcal{R}_s(X)\subseteq \mathcal{R}_s(D),~\mathcal{R}(X)\subseteq \mathcal{R}(D).$$
\end{defn}

\begin{defn}\emph{\cite{D2012}}\label{2012}~Let $A\in  \mathbb{C}^{n\times m}, B\in  \mathbb{C}^{m\times m}, C\in  \mathbb{C}^{n\times n}, X\in  \mathbb{C}^{m\times n}$. Then $X$ is the $(B,C)$-inverse of $A$ if
$$X\in B\mathbb{C}^{m\times m}X\cap X\mathbb{C}^{n\times n}C~\text{and}~XAB=B,~CAX=C.$$
\end{defn}

\begin{thm}~Let $A\in \mathbb{C}^{m\times n},~W\in \mathbb{C}^{n\times m}$ with ${\rm ind}(WA)=k$. Then  the $W$-weighted core-EP inverse of $A~($ i.e., $A^{{\scriptsize{\textcircled{\tiny \dag}}}, W})$ is the inverse of $WAW$ along $A(WA)^{k}[(WA)^k]^*$.
\end{thm}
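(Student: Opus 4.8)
The plan is to verify that $X := A^{\scriptsize{\textcircled{\tiny \dag}},W}$ satisfies the two defining conditions in Definition \ref{2011} with $D := A(WA)^{k}[(WA)^{k}]^*$, and then invoke uniqueness of the inverse along an element. Throughout I would use the working expression $A^{\scriptsize{\textcircled{\tiny \dag}},W}=A[(WA)^{\scriptsize{\textcircled{\tiny \dag}}}]^2$ from Theorems \ref{2.1} and \ref{2.2}, together with the identity $WA(WA)^{\scriptsize{\textcircled{\tiny \dag}}}=(WA)^{k}[(WA)^{k}]^{\dag}$ (Proposition 3.3(1)) and $(WA)^{\scriptsize{\textcircled{\tiny \dag}}}=(WA)^{D}(WA)^{k}[(WA)^{k}]^{\dag}$ (Lemma \ref{x}).

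First I would check the range conditions. For $\mathcal{R}(X)\subseteq\mathcal{R}(D)$: writing $X=A(WA)^{k}[(WA)^{\scriptsize{\textcircled{\tiny \dag}}}]^{k+2}$ (absorbing powers into the core-EP inverse) and then inserting $[(WA)^{k}]^{\dag}(WA)^{k}$ appropriately, one gets $X=A(WA)^{k}[(WA)^{k}]^{*}\,M$ for a suitable matrix $M$ — the key is that $\mathcal{R}((WA)^{k}[(WA)^{k}]^{\dag})=\mathcal{R}((WA)^{k})=\mathcal{R}((WA)^{k}[(WA)^{k}]^{*})$ as orthogonal projectors onto the same column space, so $(WA)^{k}=(WA)^{k}[(WA)^{k}]^{*}[((WA)^{k}[(WA)^{k}]^{*})]^{\dag}(WA)^{k}$, which lets me factor $D$ out on the left. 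The row-space inclusion $\mathcal{R}_s(X)\subseteq\mathcal{R}_s(D)$ is equivalent to $\mathcal{N}(D^*)\subseteq\mathcal{N}(X^*)$, i.e. $\mathcal{N}((WA)^{k}[((WA)^{k})^*]^* A^*) \subseteq \mathcal{N}(X^*)$; since $[(WA)^{k}]^{**}=(WA)^{k}$ this reads $\mathcal{N}((WA)^{k}A^*)\subseteq\mathcal{N}(X^*)$, which follows from the computation already done in Proposition 3.1(2) showing $\mathcal{N}(A^{\scriptsize{\textcircled{\tiny \dag}},W})=\mathcal{N}([(WA)^{k}]^*)$, combined with the fact that $(WA)^{k}A^*$ and $[(WA)^{k}]^*$ have the same null space (as $\mathcal{R}((WA)^{k}A^*)\subseteq\mathcal{R}((WA)^k) = \mathcal{R}([(WA)^k]^{**})$, and equality of ranks).

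Next I would verify $X(WAW)D=D$ and $D(WAW)X=D$. For the first, $X(WAW)=A[(WA)^{\scriptsize{\textcircled{\tiny \dag}}}]^2 WAW = A(WA)^{\scriptsize{\textcircled{\tiny \dag}}}W$ by the outer-inverse absorption, so $X(WAW)D = A(WA)^{\scriptsize{\textcircled{\tiny \dag}}}WA(WA)^{k}[(WA)^{k}]^* = A(WA)^{\scriptsize{\textcircled{\tiny \dag}}}(WA)^{k+1}[(WA)^{k}]^*$; then using $(WA)^{\scriptsize{\textcircled{\tiny \dag}}}(WA)^{k+1}=(WA)^{k}$ (the first defining equation of the core-EP inverse, shifted) this collapses to $A(WA)^{k}[(WA)^{k}]^*=D$. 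For the second, $(WAW)X = WAWA[(WA)^{\scriptsize{\textcircled{\tiny \dag}}}]^2 = WA(WA)^{\scriptsize{\textcircled{\tiny \dag}}} = (WA)^{k}[(WA)^{k}]^{\dag}$, hence $D(WAW)X = A(WA)^{k}[(WA)^{k}]^*(WA)^{k}[(WA)^{k}]^{\dag}$; since $[(WA)^{k}]^*(WA)^{k}[(WA)^{k}]^{\dag}=[(WA)^{k}]^*$ (because $(WA)^k[(WA)^k]^\dagger$ acts as the identity on $\mathcal{R}((WA)^k)\supseteq\mathcal{R}([(WA)^k]^{*}\!\cdot\!{}$ — more precisely $[(WA)^k]^\dagger = [(WA)^k]^\dagger (WA)^k [(WA)^k]^\dagger$ and the Hermitian projector identity), this gives $D$ again. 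Finally, uniqueness of the inverse along $D$ (standard, or cited from \cite{2011}) forces $A^{\scriptsize{\textcircled{\tiny \dag}},W}$ to be that inverse.

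The main obstacle I anticipate is the bookkeeping around the conjugate transpose in $D$: one must be careful that $[(WA)^{k}]^{*}$ and $[(WA)^{k}]^{\dag}$ interact correctly, using repeatedly that $(WA)^{k}[(WA)^{k}]^{\dag}$ and $[(WA)^{k}]^{\dag}(WA)^{k}$ are the orthogonal projectors onto $\mathcal{R}((WA)^k)$ and $\mathcal{R}([(WA)^k]^*)$ respectively, and that $\mathcal{R}(D)=\mathcal{R}((WA)^{k}[(WA)^k]^*)=\mathcal{R}((WA)^k)=\mathcal{R}(A^{\scriptsize{\textcircled{\tiny \dag}},W})$ after left-multiplication by $A$-type factors, matching Proposition 3.1. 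Once the four identities $X(WAW)D=D$, $D(WAW)X=D$, $\mathcal{R}(X)\subseteq\mathcal{R}(D)$, $\mathcal{R}_s(X)\subseteq\mathcal{R}_s(D)$ are in hand, the conclusion is immediate.
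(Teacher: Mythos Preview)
Your overall strategy coincides with the paper's: set $X=A[(WA)^{\scriptsize{\textcircled{\tiny \dag}}}]^2$ and $D=A(WA)^k[(WA)^k]^*$, then verify directly the four conditions in Definition~\ref{2011}. The paper carries out exactly these four verifications (in the order $X(WAW)D=D$, $D(WAW)X=D$, $\mathcal{R}(X)\subseteq\mathcal{R}(D)$, $\mathcal{R}_s(X)\subseteq\mathcal{R}_s(D)$), so nothing conceptually new is needed.

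That said, two steps in your write-up are incorrect as stated and would not survive scrutiny. First, the simplification $A[(WA)^{\scriptsize{\textcircled{\tiny \dag}}}]^2WAW=A(WA)^{\scriptsize{\textcircled{\tiny \dag}}}W$ is false in general: the identity $(WA)[(WA)^{\scriptsize{\textcircled{\tiny \dag}}}]^2=(WA)^{\scriptsize{\textcircled{\tiny \dag}}}$ holds, but $[(WA)^{\scriptsize{\textcircled{\tiny \dag}}}]^2(WA)=(WA)^{\scriptsize{\textcircled{\tiny \dag}}}$ does not (take $WA=\left(\begin{smallmatrix}1&1\\0&0\end{smallmatrix}\right)$). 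The paper avoids this by computing $X(WAW)D=A[(WA)^{\scriptsize{\textcircled{\tiny \dag}}}]^2(WA)^{k+2}[(WA)^k]^*$ directly and then absorbing two copies of $WA$ into the core-EP power via $(WA)^{\scriptsize{\textcircled{\tiny \dag}}}(WA)^{k+1}=(WA)^k$. Second, your reduction of $\mathcal{R}_s(X)\subseteq\mathcal{R}_s(D)$ to $\mathcal{N}(D^*)\subseteq\mathcal{N}(X^*)$ is the wrong equivalence: $\mathcal{N}(D^*)\subseteq\mathcal{N}(X^*)$ is equivalent to $\mathcal{R}(X)\subseteq\mathcal{R}(D)$, not to the row-space inclusion; the correct reformulation is $\mathcal{N}(D)\subseteq\mathcal{N}(X)$. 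The paper handles the row-space inclusion instead by explicitly factoring $A^{\scriptsize{\textcircled{\tiny \dag}},W}=M\cdot A(WA)^k[(WA)^k]^*$ for a concrete $M$, using $[(WA)^k]^{\dag}(WA)^k=[(WA)^{k+1}]^{\dag}(WA)^{k+1}$ (the dual of Lemma~\ref{x}). Once you repair these two points your argument becomes essentially identical to the paper's.
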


\begin{proof}~From Lemma \ref{x} and Theorem \ref{2.2}, it is possible to verify that
\begin{equation*}
\begin{aligned}
A^{{\scriptsize{\textcircled{\tiny \dag}}}, W}WAWA(WA)^{k}[(WA)^k]^*&=A[(WA)^{\scriptsize{\textcircled{\tiny \dag}}}]^2(WA)^{k+2}[(WA)^k]^*~~~~~~~~~~~~~~~~~~~~~~~~~\\
&=A(WA)^{\scriptsize{\textcircled{\tiny \dag}}}(WA)^{k+1}[(WA)^k]^*\\
&=A(WA)^k[(WA)^k]^*,
\end{aligned}
\end{equation*}
\begin{equation*}
\begin{aligned}
A(WA)^k[(WA)^k]^*WAWA^{{\scriptsize{\textcircled{\tiny \dag}}}, W}&=A(WA)^k[(WA)^k]^*WA(WA)^{\scriptsize{\textcircled{\tiny \dag}}}\\
&=A(WA)^k[(WA)^k]^*[WA(WA)^{\scriptsize{\textcircled{\tiny \dag}}}]^*~~~~~~~~~~~~~~~~~~~~~~~\\
&=A(WA)^k[WA(WA)^{\scriptsize{\textcircled{\tiny \dag}}}(WA)^k]^*\\
&=A(WA)^k[(WA)^k]^*,
\end{aligned}
\end{equation*}
\begin{equation*}
\begin{aligned}
A^{{\scriptsize{\textcircled{\tiny \dag}}}, W}&=A[(WA)^{\scriptsize{\textcircled{\tiny \dag}}}]^2
=A(WA)^k[(WA)^k]^{\dag}(WA)^k[(WA)^{\scriptsize{\textcircled{\tiny \dag}}}]^{k+2}\\
&=A(WA)^k[(WA)^k]^*[(WA)^k]^{\dag*}[(WA)^{\scriptsize{\textcircled{\tiny \dag}}}]^{k+2},~\text{i.e.,}~~~~~~~~~~~~~~~~~~~~~~~~~~~~~~~~~~~~~~~\\
\end{aligned}
\end{equation*}
$~~~~~~\mathcal{R}(A^{{\scriptsize{\textcircled{\tiny \dag}}},W})\subseteq \mathcal{R}(A(WA)^k[(WA)^k]^*),$\\
\text{as~well~as},
\begin{equation*}
\begin{aligned}
A^{{\scriptsize{\textcircled{\tiny \dag}}}, W}&=A[(WA)^{\scriptsize{\textcircled{\tiny \dag}}}]^2
=A[(WA)^D]^2(WA)^k[(WA)^k]^{\dag}\\
&=A[(WA)^D]^2[(WA)^k]^{\dag*}[(WA)^k]^*\\
&=A[(WA)^D]^2([(WA)^k]^{\dag}(WA)^k[(WA)^k]^{\dag})^*[(WA)^k]^*~~~~~~~~~~~~~~~~~~~~~~~~~~~~~~~~~\\
&=A[(WA)^D]^2[(WA)^k]^{\dag*}[(WA)^k]^{\dag}(WA)^k[(WA)^k]^*.
\end{aligned}
\end{equation*}
Since $[(WA)^k]^{\dag}(WA)^k=[(WA)^{k+1}]^{\dag}(WA)^{k+1}$(see the dual form of Lemma \ref{x}), then
\begin{equation*}
\begin{aligned}
A^{{\scriptsize{\textcircled{\tiny \dag}}}, W}&=A[(WA)^D]^2[(WA)^k]^{\dag*}[(WA)^{k+1}]^{\dag}(WA)^{k+1}[(WA)^k]^*\\
&=A[(WA)^D]^2[(WA)^k]^{\dag*}[(WA)^{k+1}]^{\dag}WA(WA)^{k}[(WA)^k]^*,~~~~~~~~~~~~~~~~~~~~~~~~~~~~\\
\text{i.e.,}~&\mathcal{R}_s(A^{{\scriptsize{\textcircled{\tiny \dag}}}, W})\subseteq  \mathcal{R}_s(A(WA)^k[(WA)^k]^*).
\end{aligned}
\end{equation*}
Hence $A^{{\scriptsize{\textcircled{\tiny \dag}}}, W}$  is the inverse of $WAW$ along $A(WA)^{k}[(WA)^k]^*$, in view of Definition \ref{2011}.
\end{proof}

\begin{thm}~Let $A\in \mathbb{C}^{m\times n},~W\in \mathbb{C}^{n\times m}$ with $k=\mathrm{max}\{{\rm ind}(AW),~{\rm ind}(WA)\}$. Then the $W$-weighted core-EP inverse of $A~($ i.e., $A^{{\scriptsize{\textcircled{\tiny \dag}}}, W})$ is the $((AW)^k,[(WA)^k]^*)$-inverse of $WAW$.
\end{thm}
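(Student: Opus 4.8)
The plan is to verify directly that $X=A^{{\scriptsize{\textcircled{\tiny \dag}}}, W}$ satisfies the defining conditions of the $(B,C)$-inverse of $WAW$ with $B=(AW)^k$ and $C=[(WA)^k]^*$, as given in Definition \ref{2012}. That is, I must check the two "absorption" conditions $XAB\mathbb{C}^{m\times m}$, precisely $X\in B\mathbb{C}^{m\times m}X$ and $X\in X\mathbb{C}^{n\times n}C$, together with the two "inverse" equations $X(WAW)B=B$ and $C(WAW)X=C$. Throughout I would use the identity $A^{{\scriptsize{\textcircled{\tiny \dag}}}, W}=A[(WA)^{{\scriptsize{\textcircled{\tiny \dag}}}}]^2$ from Theorems \ref{2.1} and \ref{2.2}, and the expression $(WA)^{{\scriptsize{\textcircled{\tiny \dag}}}}=(WA)^D(WA)^k[(WA)^k]^{\dag}$ from Lemma \ref{x}, along with the commutation $A(WA)^D=(AW)^DA$ and $A(WA)^{{\scriptsize{\textcircled{\tiny \dag}}}}$-type shifting identities $A[(WA)^{{\scriptsize{\textcircled{\tiny \dag}}}}]^2=(AW)^kA[(WA)^{{\scriptsize{\textcircled{\tiny \dag}}}}]^{k+2}$ already used in the excerpt.

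First I would handle $X(WAW)B=B$, i.e. $A^{{\scriptsize{\textcircled{\tiny \dag}}}, W}WAW(AW)^k=(AW)^k$. By the computation pattern in the proof of Theorem \ref{2.1}, $A[(WA)^{{\scriptsize{\textcircled{\tiny \dag}}}}]^2WAW(AW)^k = A(WA)^{{\scriptsize{\textcircled{\tiny \dag}}}}W(AW)^k = A(WA)^D(WA)^k[(WA)^k]^{\dag}(WA)^kW = A(WA)^D(WA)^kW = (AW)^D(AW)^{k+1}=(AW)^k$. Next, for $C(WAW)X=C$, i.e. $[(WA)^k]^*WAWA^{{\scriptsize{\textcircled{\tiny \dag}}}, W}=[(WA)^k]^*$: one computes $WAWA[(WA)^{{\scriptsize{\textcircled{\tiny \dag}}}}]^2=WA(WA)^{{\scriptsize{\textcircled{\tiny \dag}}}}=(WA)^k[(WA)^k]^{\dag}$ (already established in the proof of Theorem \ref{2.2}), so $[(WA)^k]^*WAWA^{{\scriptsize{\textcircled{\tiny \dag}}}, W}=[(WA)^k]^*(WA)^k[(WA)^k]^{\dag}$, and since $[(WA)^k]^*(WA)^k[(WA)^k]^{\dag}=([(WA)^k]^{\dag}(WA)^k[(WA)^k])^*=... $ — more simply $(WA)^k[(WA)^k]^{\dag}$ is the orthogonal projector onto $\mathcal{R}((WA)^k)$, hence Hermitian and fixing $\mathcal{R}((WA)^k)$, so it leaves $[(WA)^k]^*$ unchanged on the right (equivalently $[(WA)^k]^*P_{\mathcal{R}((WA)^k)}=[(WA)^k]^*$ because $P^*=P$ projects onto the range of $(WA)^k$, i.e. the row space condition on $[(WA)^k]^*$). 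This yields $C$.

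For the two membership conditions, $X\in B\mathbb{C}^{m\times m}X$ amounts to $\mathcal{R}(A^{{\scriptsize{\textcircled{\tiny \dag}}}, W})\subseteq\mathcal{R}((AW)^k)$, which is exactly the inclusion proved in Theorem \ref{2.2} (namely $A[(WA)^{{\scriptsize{\textcircled{\tiny \dag}}}}]^2=(AW)^kA[(WA)^{{\scriptsize{\textcircled{\tiny \dag}}}}]^{k+2}$, so writing $X=(AW)^k M X$ with $M$ chosen appropriately, or more directly $X=(AW)^k\cdot A[(WA)^{{\scriptsize{\textcircled{\tiny \dag}}}}]^{k+2}$ and then re-absorbing a factor of $X$ using the outer-inverse property $X WAW X=X$). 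Dually, $X\in X\mathbb{C}^{n\times n}C$ means $\mathcal{N}([(WA)^k]^*)\subseteq\mathcal{N}(X)$, equivalently $\mathcal{R}_s(X)\subseteq\mathcal{R}_s([(WA)^k]^*)=\mathcal{R}_s((WA)^k)$; this is the row-space inclusion established in the preceding theorem (the inverse-along-an-element proof shows $\mathcal{R}_s(A^{{\scriptsize{\textcircled{\tiny \dag}}}, W})\subseteq\mathcal{R}_s(A(WA)^k[(WA)^k]^*)\subseteq\mathcal{R}_s([(WA)^k]^*)$), from which one writes $X=X N C$ for a suitable $N$ by re-absorbing the factor $[(WA)^k]^{\dag*}$. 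The main obstacle is bookkeeping: the subtle point is turning the range/row-space inclusions into the exact algebraic membership $X\in B\mathbb{C}^{m\times m}X\cap X\mathbb{C}^{n\times n}C$ (with the factor $X$ reappearing on the correct side), which is where one leans on $X$ being an outer inverse of $WAW$ (Theorem \ref{2.8}) so that a spare copy of $X=X(WAW)X$ can be inserted to close the loop; once that reduction is made, everything else is the routine power-shuffling already illustrated repeatedly in the preceding proofs.
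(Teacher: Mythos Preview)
Your proposal is correct and follows essentially the same approach as the paper: both verify the four defining conditions of Definition~\ref{2012} directly, using $A^{{\scriptsize{\textcircled{\tiny \dag}}}, W}=A[(WA)^{\scriptsize{\textcircled{\tiny \dag}}}]^2$ together with Lemma~\ref{x} and the identity $WAWA^{{\scriptsize{\textcircled{\tiny \dag}}}, W}=WA(WA)^{\scriptsize{\textcircled{\tiny \dag}}}=(WA)^k[(WA)^k]^{\dag}$. The only cosmetic difference is that the paper writes down the explicit factorizations $X=(AW)^kA[(WA)^{\scriptsize{\textcircled{\tiny \dag}}}]^{k+1}WA^{{\scriptsize{\textcircled{\tiny \dag}}}, W}$ and $X=A^{{\scriptsize{\textcircled{\tiny \dag}}}, W}[(WA)^k]^{\dag*}[(WA)^k]^*$ for the two membership conditions, whereas you first record the range/row-space inclusions and then invoke the outer-inverse identity $X=X(WAW)X$ to recover the required factor of $X$ on the appropriate side; these two manoeuvres are equivalent.
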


\begin{proof}~Clearly, we can verify that 
\begin{equation*}
\begin{aligned}
A^{{\scriptsize{\textcircled{\tiny \dag}}}, W}&=A[(WA)^{\scriptsize{\textcircled{\tiny \dag}}}]^2=A(WA)^k[(WA)^{\scriptsize{\textcircled{\tiny \dag}}}]^{k+2}=(AW)^kA[(WA)^{\scriptsize{\textcircled{\tiny \dag}}}]^{k+2}\\
&=(AW)^kA[(WA)^{\scriptsize{\textcircled{\tiny \dag}}}]^{k+1}WA^{{\scriptsize{\textcircled{\tiny \dag}}}, W}\in (AW)^k\mathbb{C}^{m\times m}A^{{\scriptsize{\textcircled{\tiny \dag}}}, W},
\end{aligned}
\end{equation*}
\begin{equation*}
\begin{aligned}
A^{{\scriptsize{\textcircled{\tiny \dag}}}, W}&=A[(WA)^{\scriptsize{\textcircled{\tiny \dag}}}]^2=A[(WA)^{\scriptsize{\textcircled{\tiny \dag}}}]^2WA(WA)^{\scriptsize{\textcircled{\tiny \dag}}}\\&=A[(WA)^{\scriptsize{\textcircled{\tiny \dag}}}]^2(WA)^k[WA)^{k}]^{\dag}=A[(WA)^{\scriptsize{\textcircled{\tiny \dag}}}]^2[WA)^{k}]^{\dag*}[(WA)^k]^*\\
&=A^{{\scriptsize{\textcircled{\tiny \dag}}}, W}[WA)^{k}]^{\dag*}[(WA)^k]^*
\in A^{{\scriptsize{\textcircled{\tiny \dag}}}, W}\mathbb{C}^{n\times n}[(WA)^k]^*,~\text{as~well~as}, 
\end{aligned}
\end{equation*}
\begin{equation*}
\begin{aligned}
A^{{\scriptsize{\textcircled{\tiny \dag}}}, W}WAW(AW)^k&=A[(WA)^{\scriptsize{\textcircled{\tiny \dag}}}]^2(WA)^{k+1}W=A(WA)^{\scriptsize{\textcircled{\tiny \dag}}}(WA)^kW\\
&=A(WA)^D(WA)^kW=(AW)^D(AW)^{k+1}=(AW)^k,
\end{aligned}
\end{equation*}
\begin{equation*}
\begin{aligned}~
[(WA)^k]^*WAWA^{{\scriptsize{\textcircled{\tiny \dag}}}, W}&=[(WA)^k]^*WA(WA)^{\scriptsize{\textcircled{\tiny \dag}}}
=[(WA)^k]^*[WA(WA)^{\scriptsize{\textcircled{\tiny \dag}}}]^*\\
&=[WA(WA)^{\scriptsize{\textcircled{\tiny \dag}}}(WA)^k]^*=[(WA)^k]^*.
\end{aligned}
\end{equation*}
The above equalities show that $A^{{\scriptsize{\textcircled{\tiny \dag}}}, W}$ is the $((AW)^k,[(WA)^k]^*)$-inverse of $WAW$, in light of Definition \ref{2012}.
\end{proof}

\section{Computational complexities of  representations}
Following  from \cite{S2012,M2017},
the computational complexity of  the pseudoinverse of a singular $m\times n~($resp. $n\times n)$ matrix is denoted by $pinv(m,n)~($resp.
$pinv(n))$; the complexity of multiplying an $m\times n$ matrix by an  $n\times k$ matrix is denoted by $M(m,n,k)$,  abbreviated to  $m\cdot n\cdot k$; the notation $M(n)$ is used instead of $M(n,n,n)$ and is abbreviated to  $n^3$. Let $A\in \mathbb{C}^{m\times n},~W\in \mathbb{C}^{n\times m}$ with $k=$max\{ind$(WA)$,ind$(AW)\}$ and let $l$ be a non-negative integer such that $l\geq k$. In general, an $o(\log l)$ algorithm for matrix exponentiation $A^{l}$ (see \cite{C2001})  would give an algorithm for computing $(AW)^{l}$ in $\mathcal{O}(m^3\log l)$ time, so that $\mathcal{O}((AW)^l)=\mathcal{O}(m^3\log l)$ (see \cite{M2017}). Similarly, $\mathcal{O}((WA)^l)=\mathcal{O}(n^3\log l)$.

\begin{table}[!htp]
\caption{Computational complexity of (2.8)}
\begin{tabular}{ccc}
\hline
Expression~~~~~~~~~~~~~~~~~~~~~~~~~~~~~~~~~~~~&~~~~~~~~~~~~~~~~~~~~~~~~~~~~~~~~~~~~Additional complexity  \\
\hline
$AW$~~~~~~~~~~~~~~~~~~~~~~~~~~~~~~~~~~~~~~~~~~~~&~~~~~~~~~~~~~~~~~~~$m\cdot n\cdot m$  \\
$\Lambda_1=(AW)^l$~~~~~~~~~~~~~~~~~~~~~~~~~~~~~~~~~&~~~~~~~~~~~~~~~~~$m^3\log l$  \\
$\Lambda_2=(AW)^{l+1}=\Lambda_1(AW)$~~~~~~~~~~~~~~~&~~~~~~~~~~~$m^3$\\
$\Lambda_3=W(AW)^{l+1}=W\Lambda_2$~~~~~~~~~~~~~~~~&~~~~~~~~~~~~~~~~~~~$n\cdot m\cdot m$\\
$\Lambda_4=\Lambda_3^{\dag}$~~~~~~~~~~~~~~~~~~~~~~~~~~~~~~~~~~~~~~&~~~~~~~~~~~~~~~~~~~~~$pinv(n,m)$  \\
$X=(AW)^l[W(AW)^{l+1}]^{\dag}=\Lambda_1\Lambda_4$ ~~~~&~~~~~~~~~~~~~~~~~~$m\cdot m\cdot n$\\
\hline
\end{tabular}
\end{table}

\begin{table}[!htp]
\caption{Computational complexity of (2.9)}
\begin{tabular}{ccc}
\hline
Expression~~~~~~~~~~~~~~~~~~~~~~~~~~~~~~~~~~~~&~~~~~~~~~~~~~~~~~~~~~~~~~~~~~~~~~~~~Additional complexity  \\
\hline
$WA$~~~~~~~~~~~~~~~~~~~~~~~~~~~~~~~~~~~~~~~~~~~~&~~~~~~~~~~~~~~~~~~~$n\cdot m\cdot n$  \\
$(WA)^2$~~~~~~~~~~~~~~~~~~~~~~~~~~~~~~~~~~~~~~~~~&~~~~~~~~~~~~$n^3$  \\
$\Lambda_1=(WA)^l$~~~~~~~~~~~~~~~~~~~~~~~~~~~~~~~~~&~~~~~~~~~~~~~~~~~~~~~~~~~$n^3\log (l-1)$  \\
$\Lambda_2=(WA)^{l+2}=\Lambda_1(WA)^2$~~~~~~~~~~~~~~&~~~~~~~~~~~~$n^3$\\
$\Lambda_3=\Lambda_2^{\dag}$~~~~~~~~~~~~~~~~~~~~~~~~~~~~~~~~~~~~~~~&~~~~~~~~~~~~~~~~~~~$pinv(n)$\\
$X=A(WA)^l[(WA)^{l+2}]^{\dag}=A\Lambda_1\Lambda_3$ ~~~~&~~~~~~~~~~~~~~~~~~~~~~$2~m\cdot n\cdot n$\\
\hline
\end{tabular}
\end{table}

The computational complexity of $(2.8)$ can be estimated from the analysis of Table 1:
$$\mathcal{O}(2.8)=3m^2n+m^3+m^3\log l+pinv(n,m).$$
Likewise, the estimation for the computational complexity of $(2.9)$  comes from Table 2:
$$\mathcal{O}(2.9)=3mn^2+2n^3+n^3\log (l-1)+pinv(n).$$
Obviously from $\mathcal{O}(2.8)$ and $\mathcal{O}(2.9)$,  it is more appropriate to use representations involving $AW$  while $m<n$, and use representations involving $WA$  while $m\geq n$. 
In the following, we consider the case: $(0<)m< n$.

\begin{table}[!htp]
\caption{Computational complexity of (1.3)}
\begin{tabular}{ccc}
\hline
Expression~~~~~~~~~~~~~~~~~~~~~~~~~~~~~~~~~~~~&~~~~~~~~~~~~~~~~~~~~~~~~~~~~~~~~~~~~Additional complexity~  \\
\hline
$AW$~~~~~~~~~~~~~~~~~~~~~~~~~~~~~~~~~~~~~~~~~~~~&~~~~~~~~~~~~~~~~~~~$m\cdot n\cdot m$  \\
$\Lambda_1=(AW)^l$~~~~~~~~~~~~~~~~~~~~~~~~~~~~~~~~~&~~~~~~~~~~~~~~~~~$m^3\log l$  \\
$\Lambda_2=(AW)^{l+1}=\Lambda_1(AW)$~~~~~~~~~~~~~~~&~~~~~~~~~~~$m^3$\\
$\Lambda_3=\Lambda_1^{\dag}$~~~~~~~~~~~~~~~~~~~~~~~~~~~~~~~~~~~~~~~&~~~~~~~~~~~~~~~~~~$pinv(m)$\\
$\Lambda_4=W\Lambda_2\Lambda_3$~~~~~~~~~~~~~~~~~~~~~~~~~~~~~~~~&~~~~~~~~~~~~~~~~~~~~~$2~n\cdot m\cdot m$  \\
$X=[W(AW)^{l+1}[(AW)^l]^{\dag}]^{\dag}=\Lambda_4^{\dag}$ ~~~~~&~~~~~~~~~~~~~~~~~~~~~$pinv(n,m)$\\
\hline
\end{tabular}
\end{table}

\begin{table}[!htp]
\caption{ Comparison of representations (1.3) and (2.8). Entries of $A,~W$ are uniformly distributed random numbers from 0 to 1}
\vspace{2mm}
\begin{tabular}{ccccccccc}
\hline
Equation&Size $m,n$&$l\geq k$&CPU Time&$r_1$&$r_2$&$r_3$\\
\hline
(1.3)~~~~~~&                    &          &0.0300 &8.7292e+10 &1.6417e-25&~3.4789e-16 \\
       &100, 200      & $l=k=4$ &&&&\\
(2.8)~~~~~~&                   &         &0.0200& 4.7142e+10 & 8.9982e-26 &~3.7588e-16 \\
\hline
(1.3)~~~~~~&             &              &0.0300& 5.7009e+10&1.0516e-25&~2.1932e-16\\
       &100, 200&$l=k+5$&&&~&~\\
(2.8)~~~~~~&            &               &0.0200& 1.7365e+10&3.2428e-26&~6.6545e-16\\
\hline
(1.3)~~~~~~&~          &~            &0.0400&4.4537e+10&8.1622e-26&~2.1898e-16\\
~~&100, 200~&~$l=k+15$~&~&&~&~\\
(2.8)~~~~~~&          ~&~           &0.0300& 2.5859e+10&4.6722e-26&~2.3790e-16\\
\hline
(1.3)~~~~~~&~          &~            &0.0300&6.1824e+10&1.1411e-25&~2.1261e-16\\
~~&100, 200~&~$l=k+25$~&~&&~&~\\
(2.8)~~~~~~&          ~&~           &0.0300& 1.8199e+10&3.5081e-26&~2.6287e-16\\
\hline
(1.3)~~~~~~&~          &~           &0.2600& 1.2955e+12& 1.4910e-29&~4.6126e-16\\
~~&500, 1000~&~$l=k=3$~&~&&~&~\\
(2.8)~~~~~~&          ~&~           &0.2400&5.5178e+11&1.9805e-30&~5.0956e-16
\\
\hline
(1.3)~~~~~~&~          &~           &0.2600& 1.2955e+12& 1.4910e-29&~4.6126e-16\\
~~&500, 1000~&~$l=k+5$~&~&&~&~\\
(2.8)~~~~~~&          ~&~           &0.2400&5.5178e+11&1.9805e-30&~5.0956e-16
\\
\hline
(1.3)~~~~~~&              ~&          ~&0.3200&1.2142e+12&1.3975e-29&~4.8350e-16\\
        &500, 1000&$l=k+15$&~&&&~\\
(2.8)~~~~~~&               ~&~          &0.2400&5.5196e+11&3.4573e-30&~5.7581e-16\\
\hline
(1.3)~~~~~~&              ~&          ~&0.4700& 7.9785e+11&9.1222e-30&~8.3847e-16\\
        &500, 1000&$l=k+25$&~&&&~\\
(2.8)~~~~~~&               ~&~          &0.2700&5.5190e+11&1.1077e-30&~5.8500e-16\\
\hline
\end{tabular}
\end{table}

The computational complexity of $(1.3)$ is estimated from the analysis of Table 3:
$$\mathcal{O}(1.3)=3m^2n+m^3+m^3\log l+pinv(m)++pinv(n,m).$$
In view of \cite{M2017}  and  \cite{P2009}, the complexity $pinv(m)\geq M(m)=m^3>0$. From $pinv(m)> 0$, it follows that $\mathcal{O}(1.3)>\mathcal{O}(2.8)$. Hence from this perspective, representation (2.8) is better than representation (1.3).
\section{Numerical examples}

Our aim in this section is to  test the time efficiency as well as the accuracy of given representations involving only pseudoinverse, namely, Equalities (1.3) and (2.8). For which,  randomly generated singular matrices  
of different sizes are employed. 
Time efficiency is evaluated by the CPU time and the accuracy is measured by the residual norms.
All the numerical tasks have been performed by using Matlab R2017b.

Let $A\in \mathbb{C}^{m\times n}$ and $W\in \mathbb{C}^{n\times m}$ with ind$(AW)=k$. We assume that $m< n$.
 Approximation derived from a numerical method for computing $A^{{\scriptsize{\textcircled{\tiny \dag}}}, W}$ will be denoted by $X$, and the residual norms in all numerical experiments are denoted by 
$$r_1=||XW(AW)^{k+1}-(AW)^k||_2,~r_2=||AWXWX-X||_2~\text{and}~r_3=||(WAWX)^{*}-WAWX||_2.$$

From  Table 4, the following overall conclusions can be emphasized:

(1) The representation (2.8) gives a better result in the aspect of the computational speed.

(2) Representation (2.8) is  better  in accuracy with respect to the residual norms $r_1$ and  $r_2$.

(3) Contrary to the previous conclusion, the representation (1.3) is  a better expression in accuracy  with respect to norm $r_3$. 

(4) Both (1.3) and (2.8) produce bad results with respect to the norm $r_1$. This reason is the numerical instability caused by various matrix powers.

\section{Conclusion}
This paper introduces several  computational  representations for the $W$-weighted core-EP inverse by using three different matrix decompositions:

$\bullet$ singular-value decomposition;

$\bullet$ full-rank decomposition;

$\bullet$ QR decomposition.
\\
Based on these representations, some properties of the weighted core-EP inverse are derived.  Complexity of introduced representations are estimated  and numerical examples are presented. In addition, the weighted core-EP inverse is considered as a particular $(B, C)$-inverse, and a particular generalized inverse $A^{(2)}_{T,S}$.
\vspace{8mm}

\noindent {\large\bf Acknowledgements}\\
 The authors are highly grateful to the responsible editor and the anonymous referees
 for their valuable and helpful comments and suggestions. The
 first author is grateful to China Scholarship Council for supporting her further study in University of Minho, Portugal.
 \\
 
  \noindent {\large\bf Funding}\\
This research is supported by the National Natural Science Foundation
of China (No.11771076), the Scientific Innovation Research of College Graduates in Jiangsu Province (No.KYZZ16$\_$0112), Partially supported  by   FCT- `Funda\c{c}\~{a}o para a Ci\^{e}ncia e a Tecnologia', within the project UID-MAT-00013/2013.

\end{document}